\newcommand{\changed}[1]{{#1}}
\newcommand{\changedout}[1]{{{}}}
\newtheorem{theorem}{Theorem}[section]
\newtheorem{lemma}[theorem]{Lemma}
\newtheorem{corollary}[theorem]{Corollary}
\newtheorem{conjecture}[theorem]{Conjecture}
\newtheorem{claim}[theorem]{Claim}
\newtheorem{definition}[theorem]{Definition}
\numberwithin{equation}{section}
\newcommand{\PP}{\mathbb{P}}
\newcommand{\N}{\mathbb{N}}
\newcommand{\KK}{\mathcal{K}}
\newcommand{\FF}{\mathcal{F}}
\newcommand{\MM}{\mathcal{M}}
\renewcommand{\d}{\delta}
\newif\ifcutfinished
\renewcommand{\epsilon}{\varepsilon}
\newcommand{\e}{\epsilon}
\renewcommand{\subset}{\subseteq}
\newcommand{\COMMENT}[1]{}
\renewcommand{\COMMENT}{\footnote} 
\title{Fractional clique decompositions of dense graphs}
\author{Richard Montgomery\footnote{School of Mathematics,
University of Birmingham,
Edgbaston,
Birmingham,
B15 2TT,
UK. r.h.montgomery@bham.ac.uk}}
\date{}
\begin{document}
 \maketitle

\begin{abstract} For each $r\ge 4$, we show that any graph~$G$ with minimum degree at least $(1-1/(100r))|G|$ has a fractional $K_r$-decomposition. This improves the best previous bounds on the minimum degree required to guarantee a fractional $K_r$-decomposition given by Dukes (for small $r$) and Barber, K\"uhn, Lo, Montgomery and Osthus (for large $r$), giving the first bound that is tight up to the constant multiple of~$r$ (seen, for example, by considering Tur\'an graphs).

In combination with work by Glock, K\"uhn, Lo, Montgomery and Osthus, this shows that, for any graph $F$ with chromatic number $\chi(F)\ge 4$, and any $\e>0$, any sufficiently large graph $G$ with minimum degree at least $(1-1/(100\chi(F))+\e)|G|$ has, subject to some further simple necessary divisibility conditions, an (exact) $F$-decomposition.
\end{abstract}


\section{Introduction}
Given a graph $F$, a graph $G$ has an $F$-decomposition if we can cover the edges of $G$ exactly with edge-disjoint copies of $F$. The study of such decompositions dates back at least as far as 1847, when Kirkman~\cite{Kirkman} showed that any complete graph $K_n$ on~$n$ vertices has a triangle ($K_3$-)decomposition if and only if $n\equiv 1\text{ or }3\mod 6$. As the edges in a triangle decomposition of $K_n$ are partitioned into triangles, we must have $3|e(K_n)$. Furthermore, the $n-1$ neighbours of each vertex $v$ are divided into pairs forming triangles with $v$, so we must have $2|n-1$. In combination, this gives rise to the necessary \changed{and sufficient}
 condition on $n$ for the existence of a triangle decomposition of $K_n$.

It was not until the 1970's that Kirkman's theorem was generalised by Wilson~\cite{wilson1,wilson2,wilson3,wilson} to consider $F$-decompositions of cliques for more general graphs $F$. If a graph $G$ has an $F$-decomposition, then it follows immediately from the partition of $E(G)$ into copies of $F$ that $e(F)$ divides $e(G)$. Furthermore, by considering the copies of $F$ used in the decomposition around each vertex, we can see that the highest common factor of the degrees of the vertices in $F$ must divide the degree of each vertex in $G$. If $G$ satisfies both of these necessary conditions, then we say $G$ is $F$-divisible. For each graph~$F$, Wilson~\cite{wilson1,wilson2,wilson3,wilson} showed that any sufficiently large $F$-divisible complete graph has an $F$-decomposition. In 2014, the long-conjectured generalisation of Wilson's theorem to hypergraph cliques $F$ was proved in a breakthrough result of Keevash~\cite{keevash}. Very recently, an independent, combinatorial, proof of this generalisation was given by Glock, K\"uhn, Lo and Osthus~\cite{GKLO}, who then built on their methods to give a full generalisation of Wilson's theorem to arbitrary hypergraphs $F$~\cite{GKLO17}.

In general, we do not expect a simple characterisation of the $F$-divisible graphs which have an $F$-decomposition. For example, Dor and Rasi~\cite{npcomplete} have shown that determining whether a graph has an $F$-decomposition is NP-complete if $F$ has a connected component with at least 3 edges. It is therefore natural to ask instead if all sufficiently dense $F$-divisible graphs have an $F$-decomposition. Much of the work in this area has been motivated by the following beautiful conjecture of Nash-Williams on triangle decompositions.

\begin{conjecture}[Nash-Williams~\cite{NashWilliams}]\label{conjNW}
There exists $N\in \N$ such that, if~$G$ is a $K_3$-divisible graph on at least $N$ vertices with $\delta(G)\geq 3|G|/4$, then $G$ has a $K_3$-decomposition.
\end{conjecture}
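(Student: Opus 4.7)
The plan is the two-stage strategy that has driven recent progress on decomposition problems: first, produce a fractional $K_3$-decomposition for every $G$ with $\delta(G) \geq 3|G|/4$, and second, convert this into an exact decomposition for $K_3$-divisible $G$ via an iterative absorption/nibble argument in the style of Keevash and of Barber, K\"uhn, Lo and Osthus. The second stage is by now a relatively flexible toolkit: if fractional decompositions can be produced at minimum degree $(3/4 + \eps)|G|$ for every $\eps > 0$, the absorption framework converts these into exact decompositions for all sufficiently large $K_3$-divisible $G$ meeting a slightly weakened degree bound. Closing the residual $\eps$-gap to the sharp $3/4$ threshold would then require either a fractional decomposition that is already robust at the extremal threshold, or a refined absorber (in the spirit of the recent Delcourt--Postle line of work) that functions without slack.

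For the fractional stage, I would solve the linear program assigning weights $w_T \geq 0$ to triangles $T \subseteq G$ with $\sum_{T \ni e} w_T = 1$ for every edge $e$. Following Yuster, Garaszczuk, Dukes, and subsequent authors, I would start from a near-uniform weighting in which each triangle $T$ through $e$ gets weight roughly $1/t(e)$ (where $t(e)$ counts triangles through $e$), and then correct the residual row-sum errors by redistributing weight along short gadgets such as books, bowties, and labelled $K_4$'s. The core quantitative task is to show that the linear map from gadget-adjustments to edge residuals has a bounded pseudo-inverse in the appropriate norm when $\delta(G) \geq 3|G|/4$; this in turn reduces to codegree-style estimates guaranteeing that each edge lies in many gadgets of each relevant type, combined with a stability argument ensuring that the resulting corrections stay non-negative.

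The main obstacle, and the reason the conjecture has remained open despite rapid progress on the $K_r$ analogues for $r \geq 4$ (as in the present paper, where the slack $1/(100r)$ can be exploited freely), is the tightness of the $3/4$ threshold. Near-extremal constructions, such as suitably perturbed balanced blow-ups of $K_4$, contain edges whose triangle counts almost match their codegrees, leaving essentially no positive slack for the gadget-redistribution step. I therefore expect the crux to be a structural dichotomy: on $G$ that is sufficiently quasirandom (or suitably far from a $K_4$-blow-up), a direct weighting scheme succeeds; near-extremal $G$ must be handled separately, exploiting its forced near-quadripartite structure to produce fractional decompositions piece by piece and then gluing them across the partition. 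Making this dichotomy quantitative, so that both branches yield genuinely non-negative weights at exactly $\delta(G) = 3|G|/4$, is the step I would expect to be most technically demanding.
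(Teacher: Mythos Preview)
The statement you are addressing is Conjecture~\ref{conjNW}, and the paper does \emph{not} prove it; it is quoted precisely as an open problem motivating the work. There is therefore no ``paper's own proof'' to compare your proposal against. The paper's contribution is Theorem~\ref{maintheorem}, which concerns fractional $K_r$-decompositions for $r\ge 4$ and says nothing about the sharp $3/4$ threshold for triangles.

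Your write-up is not a proof but a research programme, and you essentially acknowledge this yourself: you note that the conjecture ``has remained open'', that the best current fractional bound is Dross's $\delta^*_{K_3}\le 0.9$, and that the crux --- pushing the fractional threshold down to exactly $3/4$ and handling the near-extremal $K_4$-blow-up configurations with zero slack --- is ``the step I would expect to be most technically demanding''. That step is the entire difficulty; nothing in your outline supplies a new idea for it. The two-stage fractional-then-absorb strategy you describe is indeed the standard framework (and is what the paper cites via~\cite{BKLO} and Theorem~\ref{GKLMOthm}), but it only yields Conjecture~\ref{conjNW} \emph{asymptotically} once one proves $\delta^*_{K_3}\le 3/4$, which no one has done. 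So as a proof attempt this is vacuous: you have correctly identified the known reduction and the known obstruction, but have not proposed any mechanism to overcome the latter.
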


The first progress towards Conjecture~\ref{conjNW} was given by Gustavsson~\cite{gustavsson}, who showed, for each graph $F$, that there is some constant $\e(F)>0$ such that every sufficiently large $F$-divisible graph $G$ satisfying $\delta(G)\geq (1-\e(F))|G|$ has an $F$-decomposition. The bound on $\e(F)$ claimed by Gustavsson left much room for improvement, showing only that $\e(F)\leq 10^{-37}|F|^{-94}$.
In recent years, a rich seam of progress was initiated by Barber, K\"uhn, Lo and Osthus~\cite{BKLO} by relating $F$-decompositions in dense graphs to \emph{fractional $F$-decompositions}.

Given a graph~$F$, we say that a graph $G$ has a \emph{fractional $F$-decomposition} if we may (non-negatively) weight the copies of $F$ in $G$ so that each edge in $E(G)$ is in copies of $F$ with total weight 1. That is, letting $\mathcal{F}(G)$ be the set of all copies of $F$ in $G$, there is some function $\omega:\FF(G)\to [0,1]$ so that, for each $e\in E(G)$, we have
\[
\sum_{F'\in \FF(G):e\in E(F')}\omega(F')=1.
\]
In 2014, Barber, K\"uhn, Lo and Osthus~\cite{BKLO} introduced an innovative iterative absorption method capable of turning an \emph{approximate} $F$-decomposition in an $F$-divisible graph~$G$ with high minimum degree into an $F$-decomposition, where an approximate $F$-decomposition is a disjoint set of copies of~$F$ in~$G$ covering most of the edges of~$G$.  Haxell and R\"odl~\cite{haxellrodl} had demonstrated that a large graph with a fractional $F$-decomposition must have an approximate $F$-decomposition. This allowed Barber, K\"uhn, Lo and Osthus~\cite{BKLO} to give much improved bounds on $\e(F)$ by using bounds on the minimum degree required to guarantee an appropriate fractional $F$-decomposition.

The methods in~\cite{BKLO} were subsequently developed and extended by Glock, K\"uhn, Lo, Osthus and the current author~\cite{GKLMO}.
In order to describe this accumulated progress, for each graph $F$ and integer $n$, let $\delta^*_F(n)$ be the least $\d>0$ such that any graph $G$ with $n$ vertices and $\delta(G)> \delta n$ has a fractional $F$-decomposition. For each graph $F$, let $\delta^*_F=\limsup_{n\to\infty}\delta_F^*(n)$. The main results in~\cite{GKLMO} imply the following.

\begin{theorem}[Glock, K\"uhn, Lo, Montgomery, Osthus~\cite{GKLMO}]\label{GKLMOthm} Let $F$ be a graph, let $\e>0$, and let $\chi=\chi(F)$. Any sufficiently large $F$-divisible graph $G$ with $\delta(G)\geq (\max\{\delta^*_{K_{\chi}},1-1/(\chi+1)\}+\e)|G|$ has an $F$-decomposition.
\end{theorem}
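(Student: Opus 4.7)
The plan is to follow the two-stage ``iterative absorption plus approximate decomposition'' framework of Barber, K\"uhn, Lo and Osthus, as extended in~\cite{BKLO,GKLMO}.

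First I would reserve inside $G$ a carefully designed \emph{absorbing structure} $A$ whose defining property is: for any sufficiently small $F$-divisible remainder $L$ of edges in $G-E(A)$, the graph $A\cup L$ admits an $F$-decomposition. The standard construction of such an $A$ is through a \emph{vortex}: a nested sequence $V(G)=U_0\supseteq U_1\supseteq\cdots\supseteq U_k$ of shrinking, constant-ratio sets together with a divisibility-preserving ``cover-down'' step at each level that pushes any leftover edges at level $i$ into $U_{i+1}$. The minimum-degree bound $1-1/(\chi+1)+\e$ is precisely what is needed to execute the cover-down step: inside each $U_i$ one must find copies of $F$ crossing levels in a way that respects the divisibility constraints, and this is the threshold at which the required embeddings and divisibility adjustments become available.

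Next, on $G':=G-E(A)$ (which still satisfies $\delta(G')\ge(\delta^*_{K_\chi}+\e/2)|G'|$), I would construct a \emph{fractional} $F$-decomposition. By the definition of $\delta^*_{K_\chi}$, $G'$ admits a fractional $K_\chi$-decomposition $\omega_0$. Since $\chi(F)=\chi$, the graph $F$ embeds into some blow-up $K_\chi[t]$. For each edge $e\in E(G')$ I would redistribute $\omega_0$ by averaging, over the $\omega_0$-weighted copies of $K_\chi$, across the many blow-ups $K_\chi[t]$ extending each such copy inside $G'$; the bound $1-1/(\chi+1)+\e$ forces a uniform lower bound on the number of such blow-ups through every edge, so that the averaging is well defined and a short calculation gives total $F$-weight~$1$ on every edge. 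Haxell and R\"odl's theorem then converts this fractional decomposition into an approximate $F$-decomposition of $G'$ leaving only $o(|E(G)|)$ uncovered edges, which are absorbed by $A$ to finish the $F$-decomposition of $G$.

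The main obstacle is the absorber construction. The subgraph $A$ must work uniformly for every admissible leftover $L$, and its analysis together with the cover-down that sits on top of it is what pins the minimum-degree threshold to exactly $1-1/(\chi+1)$ rather than something larger. The remaining ingredients --- the Hajnal-Szemer\'edi-style extension of $K_\chi$ to $K_\chi[t]$ used to transfer the fractional $K_\chi$-decomposition to a fractional $F$-decomposition, the Haxell-R\"odl reduction, and the final absorption call --- are comparatively modular once the vortex machinery is in place.
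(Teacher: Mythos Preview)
This theorem is not proved in the present paper at all: it is quoted from~\cite{GKLMO} as a black box and then combined with Theorem~\ref{maintheorem} to obtain Corollary~\ref{whee}. So there is no ``paper's own proof'' to compare your proposal against.

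That said, your sketch is a reasonable high-level outline of the iterative absorption strategy that~\cite{GKLMO} (building on~\cite{BKLO}) actually uses: reserve an absorber via a vortex, obtain an approximate decomposition on the rest via a fractional decomposition and Haxell--R\"odl, then absorb the leftover. Two cautions. First, the passage from a fractional $K_\chi$-decomposition to a fractional $F$-decomposition is more delicate than ``average over blow-ups $K_\chi[t]$'': one must ensure the resulting weighting is exactly~$1$ on every edge, and the actual arguments in~\cite{GKLMO} handle this with more care than your one-line redistribution suggests. Second, the role of the threshold $1-1/(\chi+1)$ is not solely in the cover-down step; it also governs the existence of the absorbers and the embeddings needed throughout the vortex, and pinning it down to exactly this value is a substantial part of~\cite{GKLMO}. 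Your proposal identifies the right ingredients but understates the work needed to make each of them go through.
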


Aside from its own intrinsic interest, determining the value of $\delta^*_{K_r}$ for each $r$ therefore has a significant part to play in the study of decompositions of dense graphs. In particular, to prove Conjecture~\ref{conjNW} asymptotically it is sufficient to show that $\delta^*_{K_3}\leq 3/4$ (as already followed from the results in~\cite{BKLO}).

In the particular case of triangles, increasingly good bounds on $\delta^*_{K_3}$ were given by Yuster~\cite{yuster2005asymptotically}, Dukes~\cite{dukes,dukes2} and Garaschuk~\cite{garaschuk2014linear}, before Dross~\cite{dross} gave an elegantly efficient proof that $\delta^*_{K_3}\leq 0.9$. For each $r\geq 4$, Yuster~\cite{yuster2005asymptotically} showed that $\delta^*_{K_r}\leq 1-1/(9r^{10})$ and gave a construction showing that $\delta^*_{K_r}\geq (1-1/(r+1))n$. Dukes~\cite{dukes,dukes2} used tools from linear algebra to show that $\delta^*_{K_r}\leq 1-2/(9r^{2}(r-1)^2)$, before Barber, K\"uhn, Lo, Osthus and the current author~\cite{BKLMO} were able to generalise and extend Dross's methods for fractional triangle decompositions to show that $\delta^*_{K_r}\leq 1-1/(10^4r^{3/2})$.

In this paper, we show that, for each $r\geq 4$, $\delta^*_{K_r}\leq 1-1/(100r)$. This improves the known upper bound for $\delta^*_{K_r}$ for each $r\geq 4$, and confirms, up to the constant $100$, the correct dependence of $\delta^*_{K_r}$ on $r$.

\begin{theorem}\label{maintheorem}
Let $r\geq 4$. If a graph $G$ has minimum degree at least $(1-1/(100r))|G|$, then $G$ has a fractional $K_r$-decomposition.
\end{theorem}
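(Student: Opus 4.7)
The plan is to approach the fractional decomposition problem via LP duality. A graph $G$ has a fractional $K_r$-decomposition if and only if the linear program asking, for each copy $K \in \mathcal{K}_r(G)$, a weight $\omega(K) \geq 0$ subject to $\sum_{K \ni e}\omega(K) = 1$ for every edge $e$, is feasible. By Farkas' lemma, infeasibility is certified by an edge weighting $\psi: E(G)\to\mathbb{R}$ with $\sum_{e\in E(K)}\psi(e) \geq 0$ for every $K_r$-copy but $\sum_e\psi(e) < 0$. So it suffices to show that no such $\psi$ exists when $\delta(G)\geq (1-1/(100r))|G|$.

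Rather than attack the dual directly, I would build $\omega$ constructively. First, define a baseline weighting by assigning every copy of $K_r$ the same weight $\lambda$, chosen so that a ``typical'' edge receives coverage exactly $1$. For an edge $uv$, the coverage is $\lambda \cdot k_{r-2}(G[N(u)\cap N(v)])$, where $k_{r-2}(H)$ counts $(r-2)$-cliques in $H$. Since $|N(u)\cap N(v)|\geq (1-1/(50r))|G|$, the common neighborhood is dense enough that $k_{r-2}$ is close to the value one gets from the whole graph, and the resulting defect $\text{err}(uv) := 1 - \lambda\cdot k_{r-2}(G[N(u)\cap N(v)])$ is small and controllable in terms of local clique count deviations. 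Next, I would design \emph{correction gadgets}: for a pair of edges $e, e'$ of $G$, a signed combination of $K_r$-copies supported near $e$ and $e'$ whose net contribution is $+1$ on $e$, $-1$ on $e'$, and zero elsewhere. A canonical source of gadgets for cliques is the ``swap'': given a $K_{r+1}$-copy on $\{v_0,v_1,\ldots,v_r\}$, the signed combination $\sum_i (-1)^i [K_r \setminus v_i]$ has coverage zero on every edge, so local modifications of this identity yield gadgets that shift weight between edges sharing many neighbours.

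The main technical step is to show that the gadget system can absorb the error vector $(\text{err}(e))_{e\in E(G)}$. I would formalize this as a transportation problem on $E(G)$: construct an auxiliary capacitated digraph whose arcs correspond to available gadgets and whose capacities come from the minimum-degree condition, then show that the error vector lies in its feasible cone. The key estimates are (i) an upper bound on the total $\ell_1$-norm of the error, which follows from a Kruskal--Katona-type comparison applied to each common neighborhood, and (ii) a lower bound on the gadget-flow capacity of $E(G)$, which follows from the fact that every pair of edges is linked by many swaps when $\delta(G) \geq (1-1/(100r))|G|$. Combining these via a flow or potential-function argument yields a nonnegative correction $\omega_1$ so that $\omega_0 + \omega_1$ is a fractional $K_r$-decomposition.

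The main obstacle is the tightness of the minimum-degree threshold. Because the theorem aims for a constant-times-$r$ bound, the analysis must avoid losing any factor of $r$ in either (i) or (ii). Controlling (i) is delicate when $G$ is close to a complete $(r-1)$-partite graph, where the number of $K_{r-2}$ in a common neighborhood can drop by a factor of order $r^{O(1)}$ relative to the generic count; and the gadget capacity in (ii) must be shown to scale linearly in the number of extra edges $G$ has beyond this extremal configuration. Handling this balance carefully---likely by weighting the base copies $K\in\mathcal{K}_r(G)$ by a local degree-dependent function rather than uniformly, so that errors concentrate where gadgets are abundant---is the crux of the argument, and is what separates the $1/(100r)$ bound from the $1/(10^4 r^{3/2})$ bound obtained by the earlier methods.
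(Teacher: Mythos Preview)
Your outline is essentially the strategy of \cite{BKLMO}, which the present paper explicitly supersedes: a near-uniform baseline weighting of $r$-cliques, followed by gadget corrections (swaps in $K_{r+1}$'s) organised as a flow problem. That method is known to deliver only $\delta^*_{K_r}\le 1-1/(10^4 r^{3/2})$; the final paragraph of your proposal correctly identifies the obstruction but does not remove it. Saying that one should ``weight the base copies by a local degree-dependent function'' and ``handle this balance carefully'' is not a proof step---it is a restatement of the problem. In particular, the error analysis (i) and the gadget-capacity bound (ii) each naturally lose a factor of $\sqrt{r}$ in this framework, and nothing in the proposal explains concretely how either loss is avoided.

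The paper's proof is structurally different in both halves. For the \emph{initial weighting}, rather than weight $r$-cliques near-uniformly, it designs a random process that selects an induced subgraph $H\subset G$ on $\Theta(r)$ vertices with $\delta(H)\ge |H|-O(1)$, arranged so that every edge of $G$ lands in $H$ with probability within a $(1-O(1/r))$-factor of a fixed target (Lemma~\ref{pickrandomsubgraph}); since each such $H$ has a fractional $K_{2r+2}$-decomposition (Lemmas~\ref{minusmatching}--\ref{Mdecomp}), this yields an approximate fractional $K_{2r+2}$-decomposition with edge weights in $[1-1/(2r+1),1]$. For the \emph{correction}, instead of gadget flows across $G$, the paper corrects \emph{inside each $(2r+2)$-clique}: by moving weight from $K$ to $K-M$ for matchings $M$ (Lemma~\ref{correction}), any edge-weight profile in $[1-1/(2r+1),1]$ on a $K_{2r+2}$ can be realised exactly by nonnegative $K_r$-weights. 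These two ideas together are what buy the linear-in-$r$ threshold; your proposal contains neither.
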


In combination with Theorem~\ref{GKLMOthm}, this implies the following.

\begin{corollary}\label{whee} Let $F$ be a graph with $\chi(F)\geq 4$ and let $\e>0$. Any sufficiently large $F$-divisible graph $G$ with $\delta(G)\geq (1-1/(100\chi(F))+\e)|G|$ has an $F$-decomposition.
\end{corollary}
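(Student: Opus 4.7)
The plan is a short deduction that combines Theorem~\ref{maintheorem} and Theorem~\ref{GKLMOthm}. Set $\chi = \chi(F) \geq 4$. First, I would translate Theorem~\ref{maintheorem} into a bound on $\delta^*_{K_\chi}$: since Theorem~\ref{maintheorem} applies to every graph with minimum degree at least $(1-1/(100\chi))|G|$, it gives $\delta^*_{K_\chi}(n) \leq 1 - 1/(100\chi)$ for every $n$, and therefore
\[
\delta^*_{K_\chi} = \limsup_{n\to\infty}\delta^*_{K_\chi}(n) \leq 1 - 1/(100\chi).
\]

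Second, I would dispense with the other term in the maximum appearing in Theorem~\ref{GKLMOthm}. Because $\chi \geq 4$, we have $100\chi > \chi + 1$, hence $1/(100\chi) < 1/(\chi+1)$ and so $1 - 1/(\chi+1) < 1 - 1/(100\chi)$. Combined with the previous inequality,
\[
\max\{\delta^*_{K_\chi},\,1-1/(\chi+1)\} \leq 1 - 1/(100\chi).
\]

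Third, I would apply Theorem~\ref{GKLMOthm} directly. Given $\e>0$ as in the corollary, any sufficiently large $F$-divisible graph $G$ with $\delta(G) \geq (1 - 1/(100\chi) + \e)|G|$ satisfies
\[
\delta(G) \geq \bigl(\max\{\delta^*_{K_\chi},\,1-1/(\chi+1)\} + \e\bigr)|G|,
\]
so Theorem~\ref{GKLMOthm} produces an $F$-decomposition of $G$. There is no real obstacle: the only mild point to be careful about is the passage from a $\limsup$ definition of $\delta^*_{K_\chi}$ to the pointwise hypothesis of Theorem~\ref{GKLMOthm}, which is absorbed harmlessly into the slack $\e$ (indeed the bound on $\delta^*_{K_\chi}(n)$ above already holds for \emph{every} $n$, so no slack is even needed for this step).
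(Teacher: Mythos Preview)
Your proposal is correct and follows exactly the approach the paper intends: the corollary is stated as an immediate consequence of combining Theorem~\ref{maintheorem} with Theorem~\ref{GKLMOthm}, and your deduction (bounding $\delta^*_{K_\chi}$ via Theorem~\ref{maintheorem}, noting $1-1/(\chi+1)<1-1/(100\chi)$ for $\chi\ge 4$, and then invoking Theorem~\ref{GKLMOthm}) is precisely that combination.
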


Consideration of Tur\'{a}n graphs with $\chi(F)-1$ classes (for example) confirms Corollary~\ref{whee} is tight up to the constant that appears before $\chi(F)$. We have not sought further small improvements in the corresponding constant in Theorem~\ref{maintheorem}, where they would complicate the proof for little gain.
Substantial new ideas appear needed to approach the conjectured value of $\delta^*_{K_r}$.

At the very highest level, our methods to prove Theorem~\ref{maintheorem} take the same form as those used in~\cite{BKLMO}. We find an initial weighting of some cliques in $G$ which is \emph{close} to a fractional $K_r$-decomposition, before making local adjustments to correct this to a fractional $K_r$-decomposition. Within this framework, however, our methods are entirely different. We introduce a simple and efficient way to make the adjustments to the initial weighting, and use a much improved initial weighting of the cliques in $G$ (in comparison to the simple, (essentially) uniform, initial weighting used in~\cite{BKLMO}).

In particular, we find the following simple observation useful: A graph $G$ has a fractional $K_r$-decomposition if we can randomly pick an $r$-clique in $G$ (with some well-chosen probability distribution) so that the probability an edge $e$ is in this $r$-clique is constant across all the edges $e\in E(G)$. This observation, seen by using a weighting on the $r$-cliques proportional to the probability distribution, is trivial. However, the perspective it brings allows us to use a random process to select a random $r$-clique of~$G$, before translating this to an initial weighting which would be more difficult to conceive or describe directly.

The rest of this paper is organised as follows. After describing our notation, we give a more detailed sketch of our methods in Section~\ref{proofsketch}. In Section~\ref{initialweighting}, we find an initial weighting of the cliques in our graph, before, in Section~\ref{sec:correction}, correcting this to give a fractional $K_r$-decomposition. Aside from the elementary, but slightly involved, calculation required to prove a key lemma (Lemma~\ref{minusmatching}), which we defer to Section~\ref{tedium}, Sections~\ref{initialweighting} and~\ref{sec:correction} give the simplest exposition of our methods for finding a fractional $K_r$-decomposition using a minimum degree bound with, up to a constant, the correct dependence on $r$ (proving Lemma~\ref{maintheoremweakened}). In Section~\ref{improvements}, we then make some improvements to a central lemma in Section~\ref{initialweighting} (Lemma~\ref{fraccliqueapprox1}) to reduce the bounds used by our methods, and hence prove Theorem~\ref{maintheorem}. In Section~\ref{tedium}, we give the calculation required to prove Lemma~\ref{minusmatching}.

\subsection{Notation}
A graph $G$ has vertex set $V(G)$, edge set $E(G)$, and minimum degree $\delta(G)$. For each $r\ge 2$, we denote by $\KK_r(G)$ the set of copies of $K_r$, the clique with $r$ vertices, in $G$. Where the graph used is clear from context we will use $\KK_r=\KK_r(G)$. For a set $E\subset V(G)^{(2)}$, we denote by $G+E$ and $G-E$ the graphs with vertex set $V(G)$ and edge sets $E(G)\cup E$ and $E(G)\setminus E$ respectively. In particular, we often use this when $E$ is a matching, i.e.\ a set of independent edges. For each $e\in V(G)^{(2)}$ we let $G-e=G-\{e\}$ and $G+e=G+\{e\}$.

For a graph $G$, we denote by $\bar{G}$ the \emph{complement} of $G$ \changed{--} the graph with vertex set $V(G)$ and edge set $V(G)^{(2)}\setminus E(G)$. For each $v\in V(G)$, $N(v)$ is the set of neighbours of $v$, and $N^c(v)=V(G)\setminus N(v)$, the set of non-neighbours of $v$. When we have a weighting $w_K$, $K\in \KK_r(G)$, of the $r$-cliques in a graph $G$, we say for each edge $e\in E(G)$ that the \emph{weight over $e$} is $\sum_{K\in \KK_r(G):e\in E(K)}w_K$. Given any event $A$, we let
\[
\mathbf{1}_{A}=\left\{\begin{array}{ll} 1 &\text{ if $A$ occurs,} \\ 0 &\text{ otherwise.}\end{array}\right.
\]


\section{Proof Sketch}~\label{proofsketch}
In order to aid our sketch, let us recap very briefly the methods used by Barber, \changed{K\"uhn, Lo,} Osthus and the current author in~\cite{BKLMO}, where methods originated by Dross~\cite{dross} were extended and generalised. In order to find a fractional $K_r$-decomposition of a graph $G$ with high minimum degree in~\cite{BKLMO}, an initial (essentially uniform) weighting was given to the subgraphs of $G$ isomorphic to $K_r$, before a series of small local changes to this weighting were made (using structures called `gadgets') to correct this weighting to a fractional $K_r$-decomposition. In contrast, here we make our initial weighting using a random process which is capable of getting far closer to a fractional decomposition than a simple uniform weighting of cliques (see Section~\ref{sketch:initial}). We use this, in fact, to get close to a fractional $K_{2r+2}$-decomposition of the graph $G$. We then make our corrections `within the $(2r+2)$-cliques' to convert this to a fractional $K_r$-decomposition of $G$ (see Section~\ref{sketch:correction}).

For each part of the proof, it is crucial that we can show that any complete graph with at least $2r+2$ vertices has a \changed{fractional} $K_r$-decomposition, even if we remove an arbitrary matching. Such a clique with removed edges is sufficiently symmetric that we may give a fractional $K_r$-decomposition directly, but as the calculation is slightly involved we defer it to Section~\ref{tedium}. We state the required result here, however, for reference.

\begin{lemma}\label{minusmatching} Let $r\geq 3$ and $k\geq 2r+2$. If $M\subset E(K_{k})$ is a matching, then $K_{k}-M$ has a fractional $K_r$-decomposition.
\end{lemma}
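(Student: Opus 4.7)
The strategy is to exploit the rich symmetry of $K_k - M$ to collapse the problem to a small linear system and then solve it explicitly.

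Set $m = |M|$ and $u = k - 2m$. Every permutation of $V(K_k)$ that fixes $M$ setwise is an automorphism of $K_k - M$; these form a group $\Gamma \cong S_u \times (S_2 \wr S_m)$. Averaging any hypothetical fractional $K_r$-decomposition over $\Gamma$ yields another which is $\Gamma$-invariant, so we may assume the weight of each $r$-clique $K$ depends only on its $\Gamma$-orbit. The $r$-cliques of $K_k - M$ are precisely the $r$-subsets of $V(K_k)$ containing at most one endpoint of each matched pair, and the $\Gamma$-orbits on these are indexed by $j := |V(K) \cap V(M)| \in \{0,\ldots,\min(r,m)\}$. Denote the common weight on type-$j$ cliques by $w_j$.

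Similarly, the edges of $K_k - M$ form at most three $\Gamma$-orbits according to how many endpoints lie in $V(M)$ (with the both-matched case further requiring the endpoints to come from different pairs). For a representative edge of each orbit, counting the type-$j$ cliques through it is an explicit product of binomial coefficients in $r, j, u, m$, and the fractional decomposition condition becomes a linear system with at most three equations in $\min(r,m)+1$ unknowns $w_j$. For $m \geq 3$ this is underdetermined ($\geq 4$ unknowns against $\leq 3$ constraints), and in the degenerate cases $m \leq 2$ or $u \leq 1$ fewer edge orbits exist so equations are dropped, keeping the system at worst square.

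To finish, I would impose a simple ansatz --- supporting $w_j$ on just three carefully chosen values of $j$ (say $j \in \{r-2, r-1, r\}$ in the generic case, and analogous choices in the degenerate ones) --- reducing everything to an explicit $3 \times 3$ (or smaller) linear system solvable in closed form by Cramer's rule or direct back-substitution. The main obstacle, and what I expect the authors flag as ``slightly involved'' in deferring to Section~\ref{tedium}, is the verification that every $w_j$ thus produced is non-negative; this is where the hypothesis $k \geq 2r + 2$ is used, ensuring that certain differences of binomial coefficients carry the correct sign (intuitively, $k = 2r+2$ is the boundary case in which $m = r$ matched pairs coexist with $u = 2$ unmatched vertices, activating all three edge-orbit equations simultaneously). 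The rest is a routine but lengthy binomial identity check, performed case by case according to the relative sizes of $u$, $m$, and $r$, with the full $3 \times 3$ system near the threshold being the tightest.
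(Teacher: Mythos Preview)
Your overall strategy---averaging over the automorphism group to make clique weights depend only on $j=|V(K)\cap V(M)|$, classifying edges into (at most) three orbits by their intersection with $V(M)$, and then solving a $3\times 3$ system supported on three chosen values of $j$---is exactly the route the paper takes. Two points of divergence are worth flagging.

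First, the paper opens with a reduction you omit: for $k>2r+2$, placing a uniform weight on all induced $(2r+2)$-vertex subgraphs of $K_k-M$ gives a fractional decomposition of $K_k-M$ into graphs of the form $K_{2r+2}-M'$ with $M'$ a matching. This eliminates $k$ as a parameter, so the final linear system depends only on $r$ and $m=|M|$ (now with $1\le m\le r$), not on $u=k-2m$. Without this step your system carries an extra free parameter and the case analysis for non-negativity becomes correspondingly heavier; more to the point, the hypothesis $k\ge 2r+2$ is consumed entirely by this reduction, not by any sign check later.

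Second, your specific ansatz $j\in\{r-2,r-1,r\}$ is not always available: an $r$-clique avoiding $M$ can have at most $m$ vertices in $V(M)$, so if $m<r$ there is simply no clique with $j=r$. After the reduction to $2r+2$ vertices, the paper instead supports the solution on $j\in\{m,\,m-1,\,\max\{0,2m-r-2\}\}$---the two largest admissible values together with the smallest---and writes down explicit closed-form weights in each of the two regimes $2m-r-2\ge 0$ and $2m-r-2<0$, checking non-negativity directly from $r\ge 3$ and $1\le m\le r$. So your plan is sound, but the ``generic'' triple you name needs to be replaced, and the reduction to $k=2r+2$ is what keeps the verification genuinely routine.
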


We will give our first initial weighting in Section~\ref{initialweighting}, before returning to it in Section~\ref{improvements} to make improvements. We do this in order to give, in Sections~\ref{initialweighting} and~\ref{sec:correction}, as clean as possible an exposition of our methods to demonstrate there is some $\e>0$ such that, for all $r\ge 3$, any graph~$G$ with minimum degree $(1-\e/r)|G|$ has a fractional $K_r$-decomposition (see Lemma~\ref{maintheoremweakened}).

In order to find our initial weighting we gain from (variations of) the following simple observation.
If we can pick a random $r$-clique $H$ from a graph $G$ so that each edge in $G$ is equally likely to appear in $E(H)$, then $G$ has a fractional $K_r$-decomposition. Indeed, the weights $w_K=e(G)\cdot \mathbb{P}(H=K)/\binom{r}{2}$, $K\in \KK_r$, will form a fractional $K_r$-decomposition of $G$.

We will use a random process to weight subgraphs $H\subset G$ with $\delta(H)\geq |H|-6$ so that each edge is given total weight close to 1 (that is, for each edge the sum of the weight of the subgraphs containing that edge is close to 1). If each such subgraph $H$ has at least $32r+62$ vertices, then we can find a fractional $K_{2r+2}$-decomposition of $H$ by partitioning the non-edges of $H$ into 5 matchings (using a little extra structure in our particular graphs $H$) and repeatedly applying Lemma~\ref{minusmatching}. We then improve our methods in Section~\ref{improvements}, using more structure that naturally arises in the subgraphs $H$ to reduce their size to only $18r+18$ vertices. The method for initially weighting these subgraphs $H$ remains the same and we will now give a representative sketch of this process, picking a random subgraph with $50r$ vertices.

\subsection{Our initial weighting}\label{sketch:initial}
Let $G$ be a graph with $50rm$ vertices and $\delta(G)\geq (1-1/(100r))|G|\geq |G|-m/2$. We will sketch how to pick a random subgraph $H$ of $G$ with $50r$ vertices so that $\delta(H)\geq |H|-6$ and every edge is approximately equally likely to appear in $H$ (c.f.\ Lemma~\ref{pickrandomsubgraph}). We do so by describing a random process which chooses a subgraph $H$ with $|H|=50r$ and $\delta(H)\geq |H|-3$ in which \changed{every vertex is equally likely to appear} in $H$, before describing how to alter this process so that each edge is approximately equally likely to appear in $H$ (at the expense of weakening the degree condition to $\delta (H)\geq |H|-6$).

We choose the vertices $\{a_1,\ldots, a_{50r}\}$ of $H$ randomly in $50r$ stages. At stage $i$, we choose a subset $A_i$ of $m$ vertices to \emph{consider}, from vertices in $G$ that have not yet been considered, and then pick a vertex $a_i\in A_i$ uniformly at random. By including in $A_i$ all the non-neighbours of $a_{i-1}$ that have not yet been considered (of which there are at most $m/2$), we ensure that every non-neighbour of $a_{i-1}$ is considered at stage $i$ or earlier. Thus, each vertex $a_i$ has no non-neighbours in $V(H)$ except for itself and possibly $a_{i-1}$ or $a_{i+1}$, and therefore the final graph $H$ satisfies $\delta(H)\geq |H|-3$. Precisely, we carry out the following process for each $i$, $1\leq i\leq 50r$.
\begin{itemize}
\item Pick $A_i\subset V(G)\setminus (\bigcup_{j<i}A_j)$ uniformly at random subject to $|A_i|=m$ and $N^c(a_{i-1})\setminus (\bigcup_{j<i}A_j)\subset A_i$ (possible as $|N^c(a_{i-1})|\leq m/2$).
\item Pick $a_i\in A_i$ uniformly at random.
\end{itemize}
Each vertex is considered exactly once in this process, and added to $V(H)$ with probability $1/m$. If the two vertices in an edge $e$ are considered in different stages, then the probability that $e$ appears in $H$ is $1/m^2$. If an edge appears within some set $A_i$, then it cannot appear in $H$. Some edges in~$G$ may be much more likely than others to appear within some set $A_i$, and thus be less likely to appear in $H$. The resulting variation in how likely each edge is to appear in $H$ can (for some graphs $G$) be too much to later correct using our methods, and therefore we need alter this random process.

Consider then taking twice as many vertices in $A_i$ (that is, $2m$ vertices) at each stage (for only $25r$ stages in total), and picking \emph{two} vertices $a_i$ and $b_i$ from $A_i$ to add to $V(H)$. By including any unconsidered non-neighbours of $a_{i-1}$ or $b_{i-1}$ in $A_i$, we can ensure $H$ is still almost complete, in fact satisfying $\delta(H)\geq |H|-6$. An edge that appears in some set $A_i$ may now possibly appear in $H$, an improvement on the first process. However, this probability is too small (close to $1/2m^2$) compared to edges whose vertices are considered at different stages (still $1/m^2$).

Fortunately, an edge is only significantly more likely than others to appear in $A_i$ if it appears in $(N^c(a_{i-1})\cup N^c(b_{i-1}))\setminus (\bigcup_{j<i}A_j)\subset A_i$ (as the other vertices in $A_i$ are chosen randomly from the remaining unconsidered vertices). As this set is a subset of $A_i$ with size at most $m=|A_i|/2$, we will be able to choose $a_i$ and $b_i$ from $A_i$ so that any edges in $(N^c(a_{i-1})\cup N^c(b_{i-1}))\setminus (\bigcup_{j<i}A_j)$ appear in $H$ with probability $1/m^2$, but so that each vertex in $A_i$ still appears in $V(H)$ with probability $1/m$ (otherwise we will alter the probability an edge appears in $H$ if its vertices are considered at different stages). Thus, we will be able to choose our subgraph $H$ so that each edge is in $H$ with roughly the same probability.

The precise process we use is given in the proof of Lemma~\ref{pickrandomsubgraph}, and is depicted in Figure~\ref{processpicture}.


\subsection{Correcting the weighting}\label{sketch:correction}
Suppose we have a graph $G$ along with a weighting $w_K$, $K\in \KK_{2r+2}$, of the $(2r+2)$-cliques in $G$ so that each edge in $G$ has total weight at least $1$ and at most $1+1/(2r)$. (Such will be the result of our initial weighting.) We will convert this into a fractional decomposition of $G$ into subgraphs with $2r+2$ vertices which are complete except for some independent non-edges -- say the set of such subgraphs in $G$ is $\MM$. As each of the subgraphs in $\MM$ has a fractional $K_r$-decomposition by Lemma~\ref{minusmatching}, we can combine fractional $K_r$-decompositions of the subgraphs in $\MM$, and the fractional decomposition of $G$ into subgraphs in $\MM$, to get a fractional $K_r$-decomposition of~$G$.

Suppose then an edge $e\in E(G)$ is given weight $1+\lambda$ by the weighting of $(2r+2)$-cliques, with $0\leq \lambda\leq 1/2r$. Consider what would happen if, for each clique $K\in \KK_{\changed{2r+2}}$ containing $e$, we replaced the weight $w_K$ on $K$ by $w_K/(1+\lambda)$ and added weight $(1-1/(1+\lambda))w_K$ to $K-e$.  We get a weighting of the graphs in $\MM$ such that the weight on $e$ is 1, and the weight on the other edges is unchanged, while only weight $(1-1/(1+\lambda))w_K\leq w_K/(2r+1)$ has been moved from any clique $K$ containing $e$.

For each $K\in \KK_{\changed{2r+2}}$, we could similarly adjust the weight on any independent set of edges $M$ in $E(K)$ simultaneously by moving weight from $K$ to $K-M$. Furthermore, by moving weight from $K$ to $K-E$ for different subsets $E\subset M$, we can make different corrections to the weight on different edges in $M$. As each such clique $K\in \KK_{\changed{2r+2}}$ can be covered by $2r+1$ sets of independent edges, we will see that we can make controlled adjustments of up to $w_K/(2r+1)$ to the weight on the edges in $K$ without decreasing the weight on $K$ to become negative. Making appropriate such adjustments for each clique in $K\in \KK_{\changed{2r+2}}$ will allow us to decrease the weight on each edge until it is exactly 1 by moving weight off the cliques $K\in \KK_{\changed{2r+2}}$ and onto other subgraphs in $\MM$. This is carried out in Section~\ref{sec:correction}, to prove Lemma~\ref{correction}, which in combination with the initial weighting lemma (Lemma~\ref{fraccliqueapprox1}, and its improvement Lemma~\ref{fraccliqueapprox2}) proves Theorem~\ref{maintheorem}.


\section{Choosing an almost-complete random subgraph}\label{initialweighting}
In this section we will show that, given some minimum degree condition in a graph $G$, we can find a weighting $w_K$, $K\in \KK_r$, so that, for each $e\in E(G)$, $1-1/r\leq\sum_{K\in \KK_r:e\in E(K)}w_K\leq 1$. As sketched in Section~\ref{proofsketch}, we will use a random process to pick a subgraph $H$ in $G$ with $\delta(H)\geq |H|-6$ so that every edge in $G$ is roughly equally likely to appear within $H$ (proving Lemma~\ref{pickrandomsubgraph}). Each subgraph $H$ will be large enough that we can show $H$ has a fractional $K_r$-decomposition, and we can combine this with the weighting from the probability distribution on such graphs $H$ to get the required approximate fractional $K_r$-decomposition of $G$.

In fact, the random graph $H$ will have the stronger property that it contains a spanning subgraph $H'$ which is isomorphic to a graph of the following form.

\begin{definition}\label{Mrdefn} For each integer $r\geq 1$, let $M_r$ be the graph with vertex set $\{a_1,b_1,\ldots,a_r,b_r\}$ which has every edge present between sets $\{a_i,b_i\}$ and $\{a_j,b_j\}$ if $i\notin \{j-1,j,j+1\}$, and no other edges.
\end{definition}

Note that, for each $r\geq 3$, $\delta(M_r)=2r-6$. Using Lemma~\ref{minusmatching} repeatedly, we can show that any graph $H$ with $32r+62$ vertices which contains a copy of $M_{16r+31}$ has a fractional $K_r$-decomposition (see Lemma~\ref{fraccliqueapprox1}). Later, in Section~\ref{improvements}, we will carry out some more work to allow us to use graphs $H$ with only $18r+18$ vertices (see Lemma~\ref{fraccliqueapprox2}).

\subsection{Our initial probability distribution}
We are now ready to give our random process. Note that the following lemma will be applied eventually with $r$ replaced by a larger function of~$r$. We will also use a divisibility condition on the number of vertices in $G$ in this lemma; this, as we shall see, we can assume by duplicating vertices in our initial graph.

\begin{lemma}\label{pickrandomsubgraph} Let $r\geq 3$ and let $G$ be a graph with $n=2rm$ vertices and $\delta(G)\geq n-m/2$. Let $\mathcal{M}$ be the set of induced subgraphs of $G$ with $2r$ vertices which contain a copy of $M_r$. Then, with an appropriate probability distribution, we may randomly select a graph $M\in \MM$ so that, for each $e\in E(G)$,
\begin{equation}\label{edgeaim}
1-\frac{4}{r}\leq m^2\cdot\mathbb{P}(e\in E(M))\leq 1.
\end{equation}
\ifcutfinished \qed
\else
\fi
\end{lemma}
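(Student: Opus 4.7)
The plan is to formalize the random process sketched in Section~\ref{sketch:initial} and design the stage-wise distribution carefully so that the required edge probability bounds can be extracted from two separate regimes: ``same-stage'' edges (both endpoints considered in the same step) and ``cross-stage'' edges (endpoints considered in different steps).

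First, I would formalize the process in $r$ stages. At stage $i$, define the mandatory set $B_i = (N^c(a_{i-1}) \cup N^c(b_{i-1})) \setminus \bigcup_{j<i} A_j$ (with $B_1 = \emptyset$), which satisfies $|B_i| \leq m$ by the degree hypothesis $\delta(G) \geq n - m/2$. Pick $A_i$ uniformly at random among all $2m$-subsets of $V(G) \setminus \bigcup_{j<i} A_j$ that contain $B_i$, and then pick an unordered pair $\{a_i, b_i\} \subset A_i$ from a distribution $\pi_i$ tailored to $B_i$. Before analyzing probabilities, I would verify the two structural properties of $M := G[\{a_1, b_1, \dots, a_r, b_r\}]$: it has exactly $2r$ vertices (the $A_i$ are disjoint); and it contains a copy of $M_r$, because if $|i-j| \geq 2$ with $i<j$ and some vertex of $\{a_j, b_j\}$ were a non-neighbour of a vertex of $\{a_i, b_i\}$, it would have been forced into $B_{i+1} \subset A_{i+1}$, contradicting disjointness of the $A_\ell$.

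Next I would specify $\pi_i$ as follows: for each pair $\{u,v\} \subset A_i$, set $\pi_i(\{u,v\}) = 1/m^2$ if $\{u,v\} \subset B_i$, set $\pi_i(\{u,v\}) = \alpha$ if $|\{u,v\} \cap B_i| = 1$, and set $\pi_i(\{u,v\}) = \beta$ if $\{u,v\} \cap B_i = \emptyset$, where $\alpha,\beta$ are determined by the constraint that each $v \in A_i$ be selected with marginal probability $1/m$. Since $|B_i| \leq m = |A_i|/2$, a short computation shows $\alpha, \beta \in [0, 1/m^2]$ and $\pi_i$ is a valid distribution. The key features are $\pi_i \leq 1/m^2$ on all pairs (giving the upper bound) and $\pi_i = 1/m^2$ on pairs in $\binom{B_i}{2}$ (compensating for the fact that such pairs are forced to lie in $A_i$).

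The core computation is then to bound $\mathbb{P}(e \in E(M)) = \mathbb{P}(u,v \in V(H))$ for each $e = \{u,v\} \in E(G)$. Splitting by stages,
\[
\mathbb{P}(e \in E(M)) \;=\; \sum_{i \neq j} \mathbb{P}(u \in \{a_i,b_i\},\, v \in \{a_j,b_j\}) \;+\; \sum_i \mathbb{E}\bigl[\pi_i(\{u,v\}) \mathbf{1}(u,v \in A_i)\bigr].
\]
Conditional on the partition $(A_1,\dots,A_r)$, the stage-wise selections are independent and each vertex in $A_i$ is selected with probability $1/m$, so the cross-stage sum equals $(1 - \mathbb{P}(\text{same stage}))/m^2$. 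Using $\pi_i \leq 1/m^2$ then gives the upper bound of $1/m^2$ directly. For the lower bound we rewrite the total as $1/m^2 - \sum_i \mathbb{E}[(1/m^2 - \pi_i(\{u,v\})) \mathbf{1}(u,v \in A_i)]$ and bound the loss term. The loss is non-trivial only when $\{u,v\} \cap B_i \neq \emptyset$ for some $i$; here the degree bound $|N^c(a_{i-1}) \cup N^c(b_{i-1})| \leq m$ together with the fact that each vertex has at most $m/2$ non-neighbours limits, in expectation over all $r$ stages, the probability that either $u$ or $v$ lands in some $B_i$, ultimately yielding a loss of at most $4/(rm^2)$.

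The main obstacle I expect is the last step: controlling the loss term from the ``same-stage'' contribution. The pairs $\{u,v\}$ entering $A_i$ via $B_i$ get probability $1/m^2$ by design, so the loss comes from pairs where only one endpoint is forced into $B_i$ (where $\pi_i = \alpha$ may be as small as $1/m^3$). Bounding the expected frequency of this event requires expressing $\mathbb{P}(u \in B_i)$ in terms of $|N^c(u)|$ and the partial exposure of $V(G) \setminus \bigcup_{j<i} A_j$, summing over $i$, and verifying that the resulting loss fits inside the $4/r$ slack.
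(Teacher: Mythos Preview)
Your process is in fact equivalent to the paper's (averaging the paper's pair distribution over its auxiliary random split of $A_i$ reproduces exactly your $\pi_i$), but your analysis of the loss contains a concrete error. You assert that ``the loss is non-trivial only when $\{u,v\}\cap B_i\neq\emptyset$'', i.e.\ that $\beta=1/m^2$ for pairs with neither endpoint in $B_i$. This is false: solving the marginal constraints gives
\[
\frac{1}{m^2}-\beta=\frac{2(m-1)(m-|B_i|)}{m^2(2m-|B_i|)(2m-|B_i|-1)},
\]
which is strictly positive whenever $|B_i|<m$; for instance if $B_i=\emptyset$ then $\beta=1/(m(2m-1))\approx 1/(2m^2)$. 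So pairs with neither endpoint forced also incur a loss of order $1/m^2$ each time they land in the same $A_i$, and your bookkeeping via $\mathbb{P}(u\in B_i)$ misses this contribution entirely. Your sketch for the ``one endpoint in $B_i$'' case is likewise too coarse: it only yields $\sum_i\mathbb{P}(u\in B_i)\le |N^c(u)|/m\le 1/2$, which with a per-occurrence loss of order $1/m^2$ gives a total of order $1/m^2$, not $4/(rm^2)$.

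The paper repairs both issues by retaining extra randomness that you have integrated out. It splits each $A_i$ into two halves $A_{i,1}\supseteq B_i$ and $A_{i,2}$, each of size exactly $m$, and sets $\pi_i=1/m^2$ for same-half pairs and $1/m^3$ for cross-half pairs (this also has vertex marginal $1/m$). Now the loss is precisely $(1/m^2-1/m^3)\cdot\mathbb{P}(\exists i:\{u,v\}\text{ straddles }A_{i,1},A_{i,2})$, and one must bound this straddling probability by $4/r$. The paper does so via the uniform bound $\mathbb{P}(v\in A_{i,2}\mid u\in A_{i,1})\le 2/r$ for every $i$, then sums using $\sum_i\mathbb{P}(u\in A_{i,1})\le 1$. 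The conditional bound is immediate for $i\le 3r/4$ (since $|A_{i,2}|=m$ sits in a pool of size at least $(r/2)m$), but for $i>3r/4$ it requires a genuine switching argument comparing outcomes with $v\in A_{i,2}$ to outcomes where $v$ is swapped with a typical already-placed vertex. Neither ingredient---the half-split that isolates the loss to a single clean event, nor the switching bound for late stages---is present in your plan.
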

\ifcutfinished
\else
\begin{proof}
Let $B=\{a_1,b_1,\ldots,a_r,b_r\}$ be a random subset of $V(G)$ picked according to the following method, which is depicted in Figure~\ref{processpicture}: Let $B_1=\emptyset$ and carry out the following \changed{steps} for $1\leq i\leq r$.
\begin{enumerate}[label = \changed{\textbf{S\arabic{enumi}}}]
\item \label{step0} If $i>1$, let $B_i= (N^c(a_{i-1})\cup N^c(b_{i-1}))\setminus (\bigcup_{j<i}A_j)$, noting that $|B_i|\leq m$.
\item Let $A_{i,1}$ be the set $B_i$ combined with an $(m-|B_i|)$-sized subset of $V(G)\setminus ((\bigcup_{j<i}A_j)\cup B_i)$ selected independently and uniformly at random.\label{varies1}
\item \label{step1} Let $A_{i,2}$ be an $m$-sized subset of $V(G)\setminus ((\bigcup_{j<i}A_j)\cup A_{i,1})$ selected independently and uniformly at random, and let $A_i=A_{i,1}\cup A_{i,2}$.
\item \label{varies2} Let $\{a_i,b_i\}$ be a pair of distinct vertices from $A_i$ chosen independently at random so that
\begin{equation}\label{edgedist}
\mathbb{P}(\{a_i,b_i\}=\{a,b\})=\left\{\begin{array}{ll}
1/m^2 & \text{ if }\{a,b\}\subset A_{i,1}\text{ or }\{a,b\}\subset A_{i,2} \\
1/m^3 & \text{ otherwise.}\end{array}\right.
\end{equation}
\end{enumerate}
Let $M=G[B]$. We will show that $M$ is a random subgraph in $\MM$ which satisfies~\eqref{edgeaim} for each $e\in E(G)$.

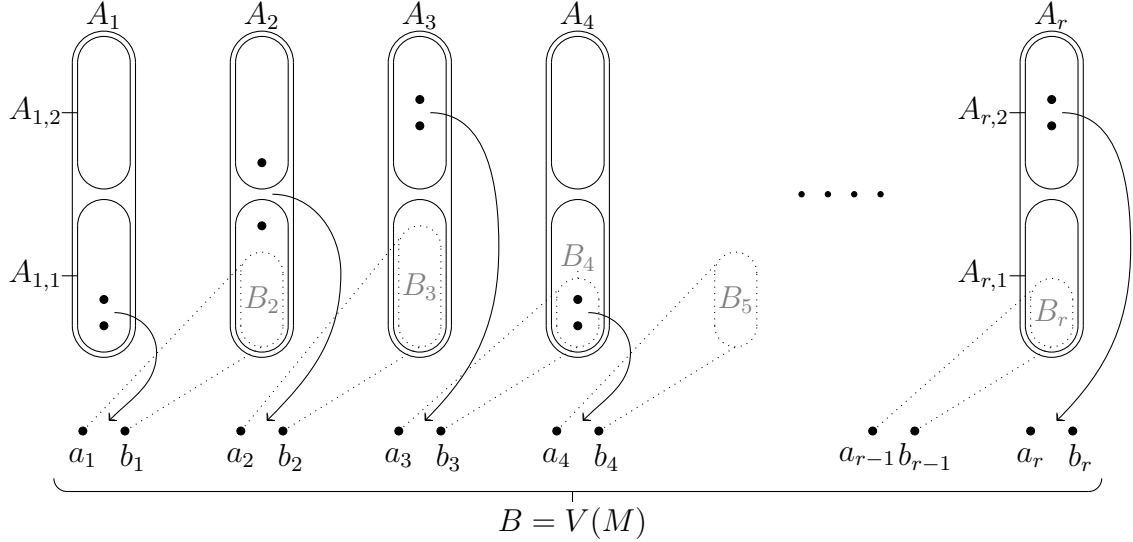
\begin{figure}

\begin{center}
\hspace{-1.1cm}
{\scalefont{0.4}
\begin{tikzpicture}[scale=0.7]


\foreach \x in {1,2,3,4,7}
{
\draw ({-1+3*\x},5.6) -- ({-1+3*\x},7.5);
\draw ({3*\x},5.6) -- ({3*\x},7.5);
\draw  ({3*\x},7.5)
arc [start angle=0, end angle=180,
x radius=0.5cm, y radius=0.5cm];
\draw  ({-1+3*\x},5.6)
arc [start angle=-180, end angle=0,
x radius=0.5cm, y radius=0.5cm];

\draw ({-1+3*\x},2.5) -- ({-1+3*\x},4.4);
\draw ({3*\x},2.5) -- ({3*\x},4.4);
\draw  ({3*\x},4.4)
arc [start angle=0, end angle=180,
x radius=0.5cm, y radius=0.5cm];
\draw  ({-1+3*\x},2.5)
arc [start angle=-180, end angle=0,
x radius=0.5cm, y radius=0.5cm];

\draw ({-1.1+3*\x},2.5) -- ({-1.1+3*\x},7.5);
\draw ({3*\x+0.1},2.5) -- ({3*\x+0.1},7.5);
\draw  ({3*\x+0.1},7.5)
arc [start angle=0, end angle=180,
x radius=0.6cm, y radius=0.6cm];
\draw  ({-1.1+3*\x},2.5)
arc [start angle=-180, end angle=0,
x radius=0.6cm, y radius=0.6cm];
}

\foreach \x in {6}
{
\foreach \y in {-2,-1,0,1}
{
\draw [fill] ({3*\x+0.5* \y-0.75-0.5},5) circle [radius=0.05cm];
}
}

\foreach \x in {1,2,3,4}
{
\draw ({3*\x-0.5},8.4) node {\large $A_{\x}$};
\draw ({3*\x-0.9},0) node {\large $a_{\x}$};
\draw ({3*\x+0.05},0) node {\large $b_{\x}$};

\draw [fill] ({3*\x-0.9},0.5) circle [radius=0.075cm];
\draw [fill] ({3*\x-0.1},0.5) circle [radius=0.075cm];
}
\foreach \x in {7}
{
\draw ({3*\x-0.5},8.4) node {\large $A_{r}$};
}

\foreach \x in {6}
{
\draw ({3*\x-1},0) node {\large $a_{r-1}$};
\draw ({3*\x+0.1},0) node {\large $b_{r-1}$};

\draw [fill] ({3*\x-0.9},0.5) circle [radius=0.075cm];
\draw [fill] ({3*\x-0.1},0.5) circle [radius=0.075cm];
}

\foreach \x in {7}
{
\draw ({3*\x-0.9},0) node {\large $a_{r}$};
\draw ({3*\x+0.05},0) node {\large $b_{r}$};

\draw [fill] ({3*\x-0.9},0.5) circle [radius=0.075cm];
\draw [fill] ({3*\x-0.1},0.5) circle [radius=0.075cm];
}

\draw ({3*1-1.2},{-0.65}) -- ({3*7+0.2},{-0.65});
\draw  ({3*7+0.2},-0.65)
arc [start angle=-90, end angle=0,
x radius=0.25cm, y radius=0.25cm];
\draw  ({3*1-1.2},-0.65)
arc [start angle=+270, end angle=180,
x radius=0.25cm, y radius=0.25cm];

\draw ({3*4-0.6},-0.65) -- ({3*4-0.6},-0.85);
\draw ({3*4-0.6},-1.25) node {\large $B=V(M)$};

\foreach \x in {1,4}
{
\draw [fill] ({3*\x-0.5},2.5) circle [radius=0.075cm];
\draw [fill] ({3*\x-0.5},3) circle [radius=0.075cm];

\draw  ({3*\x-0.5+0.2},2.75) to [out=0,in=90] ({3*\x+0.5},2);
\draw [->] ({3*\x+0.5},2) to [out=270,in=45] ({3*\x-0.4},0.7);
\draw ({3*\x-0.4},0.7) -- ({3*\x-0.4},0.85);
\draw ({3*\x-0.4},0.7) -- ({3*\x-0.4+0.15},0.7);

}
\foreach \x in {2}
{
\draw [fill] ({3*\x-0.5},4.4) circle [radius=0.075cm];
\draw [fill] ({3*\x-0.5},5.6) circle [radius=0.075cm];

\draw [->] ({3*\x-0.5+0.2},5) to [out=0,in=90] ({3*\x+1},3.5)
          to [out=270,in=45] ({3*\x-0.4},0.7);

\draw ({3*\x-0.4},0.7) -- ({3*\x-0.4},0.85);
\draw ({3*\x-0.4},0.7) -- ({3*\x-0.4+0.15},0.7);
}
\foreach \x in {3,7}
{
\draw [fill] ({3*\x-0.5},6.3) circle [radius=0.075cm];
\draw [fill] ({3*\x-0.5},6.8) circle [radius=0.075cm];

\draw [->] ({3*\x-0.5+0.2},6.55) to [out=0,in=90] ({3*\x+0.5+0.5},4)
          to [out=270,in=45] ({3*\x-0.4},0.7);

\draw ({3*\x-0.4},0.7) -- ({3*\x-0.4},0.85);
\draw ({3*\x-0.4},0.7) -- ({3*\x-0.4+0.15},0.7);
}

\draw (1.2,3.45) node {\large $A_{1,1}$};
\draw (1.7,3.45) -- (2,3.45);
\draw (1.7,6.55) -- (2,6.55);
\draw (1.2,6.55) node {\large $A_{1,2}$};

\draw (19.2,3.45) node {\large $A_{r,1}$};
\draw (19.7,3.45) -- (20,3.45);
\draw (19.7,6.55) -- (20,6.55);
\draw (19.2,6.55) node {\large $A_{r,2}$};


\foreach \be in {dotted}
{
\foreach \gr in {gray}
{
\foreach \x in {2,5}
{
\draw [\be] ({-0.9+3*\x},2.5) -- ({-0.9+3*\x},3.5);
\draw [\be] ({3*\x-0.1},2.5) -- ({3*\x-0.1},3.5);
\draw [\be] ({3*\x-0.1},3.5)
arc [start angle=0, end angle=180,
x radius=0.4cm, y radius=0.4cm];
\draw [\be] ({-1+3*\x+0.1},2.5)
arc [start angle=-180, end angle=0,
x radius=0.4cm, y radius=0.4cm];

\draw [dotted] ({3*(\x-1)-0.9},0.5) -- ({-0.85+3*\x},3.7);
\draw [dotted] ({3*(\x-1)-0.1},0.5) -- ({-0.5+3*\x},2.1);

\draw [\gr] ({3*\x-0.5},3) node {\large $B_\x$};
}

\foreach \x in {3}
{
\draw [\be] ({-0.9+3*\x},2.5) -- ({-0.9+3*\x},4);
\draw [\be] ({3*\x-0.1},2.5) -- ({3*\x-0.1},4);
\draw [\be]  ({3*\x-0.1},4)
arc [start angle=0, end angle=180,
x radius=0.4cm, y radius=0.4cm];
\draw [\be]  ({-1+3*\x+0.1},2.5)
arc [start angle=-180, end angle=0,
x radius=0.4cm, y radius=0.4cm];

\draw [dotted] ({3*(\x-1)-0.9},0.5) -- ({-0.85+3*\x},4.2);
\draw [dotted] ({3*(\x-1)-0.1},0.5) -- ({-0.5+3*\x},2.1);

\draw [\gr] ({3*\x-0.5},3.25) node {\large $B_\x$};
}

\foreach \x in {4}
{
\draw [\be] ({-0.9+3*\x},2.5) -- ({-0.9+3*\x},3);
\draw [\be] ({3*\x-0.1},2.5) -- ({3*\x-0.1},3);
\draw [\be]  ({3*\x-0.1},3)
arc [start angle=0, end angle=180,
x radius=0.4cm, y radius=0.4cm];
\draw [\be]  ({-1+3*\x+0.1},2.5)
arc [start angle=-180, end angle=0,
x radius=0.4cm, y radius=0.4cm];

\draw [dotted] ({3*(\x-1)-0.9},0.5) -- ({-0.85+3*\x},3.2);
\draw [dotted] ({3*(\x-1)-0.1},0.5) -- ({-0.5+3*\x},2.1);

\draw [\gr] ({3*\x-0.5},3.8) node {\large $B_\x$};
\draw [\be] ({3*\x-0.5},3.4)--({3*\x-0.5},3.6);
}

\foreach \x in {7}
{
\draw [\be] ({-0.9+3*\x},2.5) -- ({-0.9+3*\x},3);
\draw [\be] ({3*\x-0.1},2.5) -- ({3*\x-0.1},3);
\draw  [\be] ({3*\x-0.1},3)
arc [start angle=0, end angle=180,
x radius=0.4cm, y radius=0.4cm];
\draw  [\be] ({-1+3*\x+0.1},2.5)
arc [start angle=-180, end angle=0,
x radius=0.4cm, y radius=0.4cm];

\draw [dotted] ({3*(\x-1)-0.9},0.5) -- ({-0.85+3*\x},3.2);
\draw [dotted] ({3*(\x-1)-0.1},0.5) -- ({-0.5+3*\x},2.1);

\draw [\gr] ({3*\x-0.5},2.75) node {\large $B_r$};
}

}
}

\end{tikzpicture}
}
\end{center}

\caption{The random process used to select the set $B=\{a_1,b_1,\ldots,a_r,b_r\}$ which forms the vertex set of the random subgraph~$M$. At each stage $i$, $1\leq i\leq r$, a set of vertices $A_i=A_{i,1}\cup A_{i,2}$ is considered, from which we select two vertices $a_i$ and $b_i$. For $i\geq 2$, $A_{i,1}$ is chosen to contain any non-neighbours of $a_{i-1}$ and $b_{i-1}$ which have not yet been considered (the vertices in the set $B_i$).}\label{processpicture}
\end{figure}

In the process above, we found a random partition $(A_i)_{i=1}^r$, and selected 2 vertices from each set $A_i$ to form $B$, so that $|B|=2r$. Note further that, for each $i<r$, $N^c(a_i)\cup N^c(b_i)\subset B_{i+1}\cup (\bigcup_{j\leq i}A_{j})$. Therefore, for each $i<r$ and $j>i+1$, $a_j,b_j\in A_j$ are neighbours of $a_i$ and $b_i$. Thus, $M\in \MM$.

Therefore, it is only left to show that~\eqref{edgeaim} holds for each $e\in E(G)$. We will show this by conditioning on how the vertices of $e$ appear in the random partition.

Fix an edge $e=uv\in E(G)$, and note that if $u$ appears in $A_i$, but $v$ has not appeared in $A_j$, $j\leq i$, then the probability that $e\in E(M)$ is $1/m^2$. Indeed, in this case $u$ and $v$ were selected to be in $B$ independently at random, with (by symmetry) probability $1/m$, so that
\begin{equation}\label{conditioning1}
\mathbb{P}\big(uv\in E(M)|\exists i\text{ s.t. }u\in A_i\text{ and $v\notin \bigcup\nolimits_{j\leq i}A_j$}\big)=1/m^2.
\end{equation}
This also holds with $u$ and $v$ switched, so we only need consider the case when $u$ and $v$ first appear in the same set $A_i$. In this case, the edge $uv$ appears in $M$ with the distribution in~\eqref{edgedist}, so that
\begin{equation}\label{conditioning2}
\mathbb{P}(uv\in E(M)|\exists i\text{ and }j\text{ s.t.\ }\{u,v\}\subset A_{i,j})=1/m^2,
\end{equation}
and
\begin{equation}\label{conditioning3}
\mathbb{P}(uv\in E(M)|\exists i\text{ s.t.\ }u\in A_{i,1}\text{ and }v\in A_{i,2})=1/m^3.
\end{equation}
By~\eqref{conditioning1},~\eqref{conditioning2} and~\eqref{conditioning3}, and the symmetric cases with $u$ switched with $v$, we therefore have
\begin{equation*}\label{lunch1}
\mathbb{P}(e\in E(M)|\nexists i\text{ s.t. } V(e)\cap A_{i,1},V(e)\cap A_{i,2}\neq \emptyset)=1/m^2
\end{equation*}
and
\begin{equation*}\label{lunch2}
\mathbb{P}(e\in E(M)|\exists i\text{ s.t. } V(e)\cap A_{i,1},V(e)\cap A_{i,2}\neq \emptyset)=1/m^3.
\end{equation*}
Thus,
\changed{
\begin{align*}\label{lunch2}
\mathbb{P}(e\in E(M))&= \mathbb{P}(\nexists i\text{ s.t. } V(e)\cap A_{i,1},V(e)\cap A_{i,2}\neq \emptyset)\cdot \frac{1}{m^2}+ \mathbb{P}(\exists i\text{ s.t. } V(e)\cap A_{i,1},V(e)\cap A_{i,2}\neq \emptyset)\cdot \frac{1}{m^3}\\
& =\frac{1}{m^2}-\left(\frac{1}{m^2}-\frac{1}{m^3}\right)\cdot \mathbb{P}(\exists i\text{ s.t. } V(e)\cap A_{i,1},V(e)\cap A_{i,2}\neq \emptyset),
\end{align*}
so that}
\begin{equation}\label{almostdone}
1\ge m^2\cdot \mathbb{P}(e\in E(M))\ge 1-\mathbb{P}(\exists i\text{ s.t. } V(e)\cap A_{i,1},V(e)\cap A_{i,2}\neq \emptyset).
\end{equation}
We will show the following claim.

\begin{claim}\label{claim1}
For each $i\in[r]$, $\mathbb{P}(v\in A_{i,2}|u\in A_{i,1})\leq 2/r$.
\end{claim}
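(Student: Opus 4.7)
The plan is to condition on the history $\mathcal{F}_{<i}$ of the random process up through the end of stage $i-1$, which fixes the sets $V_i := V(G) \setminus \bigcup_{j<i} A_j$ and $B_i$, as well as $a_{i-1}$ and $b_{i-1}$. Note that $|V_i| = 2m(r-i+1)$ is deterministic, because each $A_j$ has size exactly $2m$. Two containment observations are used throughout: since $B_i \subseteq A_{i,1}$ and $A_{i,1} \cap A_{i,2} = \emptyset$, we have $\{v \in A_{i,2}\} \subseteq \{v \in V_i \setminus B_i\}$; and trivially $\{u \in A_{i,1}\} \subseteq \{u \in V_i\}$. So only histories with $u \in V_i$ and $v \in V_i \setminus B_i$ can contribute.

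With $\mathcal{F}_{<i}$ fixed, I would compute $\mathbb{P}(v \in A_{i,2} \mid u \in A_{i,1}, \mathcal{F}_{<i})$ by splitting on whether $u \in B_i$ or $u \in V_i \setminus B_i$. In each case a direct counting argument over the uniform random completion of $A_{i,1}$ (a uniform $(m-|B_i|)$-subset of $V_i \setminus B_i$) followed by the uniform random choice of $A_{i,2}$ (an $m$-subset of $V_i \setminus A_{i,1}$) yields an upper bound of $\frac{m}{|V_i|-|B_i|-1}$ (with the slightly better value $\frac{m}{|V_i|-|B_i|}$ in the $u \in B_i$ subcase). Substituting $|V_i| = 2m(r-i+1)$ and the consequence $|B_i| \leq |N^c(a_{i-1})|+|N^c(b_{i-1})| \leq m$ of the minimum-degree hypothesis $\delta(G) \geq n - m/2$, this simplifies to at most $\frac{m}{m(2r-2i+1)-1}$.

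For stages $i$ up to roughly $3r/4$ the simplified per-history estimate is already below $2/r$ uniformly in $\mathcal{F}_{<i}$, so the claim in that range is immediate. The main obstacle I anticipate is handling the final stages, when $i$ is close to $r$ and $|V_i|$ is only a constant multiple of $m$, so that the per-history bound can approach $1$. For those stages I would take expectations over $\mathcal{F}_{<i}$ and exploit that the containment event $\{v \in V_i \setminus B_i\}$ is itself rare — intuitively because $v$ has had $i-1$ earlier opportunities to be absorbed into $\bigcup_{j<i} A_j$, and forcing $v$ to survive while simultaneously forcing $u$ into $A_{i,1}$ is doubly restrictive. Quantifying this rareness well enough that the product $\mathbb{P}(v \in V_i \setminus B_i \mid u \in A_{i,1}) \cdot \frac{m}{|V_i|-|B_i|-1}$ stays at most $2/r$ uniformly across $i \in [r]$, rather than splitting into many boundary cases, is the step I expect to be the most delicate part of the argument.
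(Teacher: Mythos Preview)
Your treatment of the easy range $i\le 3r/4$ is correct and coincides with the paper's: the per-history bound $\mathbb{P}(v\in A_{i,2}\mid u\in A_{i,1},\mathcal{F}_{<i})\le m/(|V_i|-|B_i|-1)\le m/(m(2r-2i+1)-1)$ is exactly the estimate the paper writes as $m/(2(r-i)m+m)$, and it is already at most $2/r$ in this range.

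The gap is the hard range $i>3r/4$, which you explicitly leave open. Your factorisation
\[
\mathbb{P}(v\in A_{i,2}\mid u\in A_{i,1})\le \frac{m}{m(2r-2i+1)-1}\cdot \mathbb{P}(v\in V_i\setminus B_i\mid u\in A_{i,1})
\]
is valid, but the second factor is \emph{precisely} the quantity that carries all the difficulty: the conditioning on $\{u\in A_{i,1}\}$ is not innocuous, since $u\in V_i$ forces every earlier pair $(a_j,b_j)$ to avoid the non-neighbours of $u$, which in turn biases the sets $B_j$ and hence the trajectory of $v$. Your heuristic that survival of $v$ is ``rare because $v$ had $i-1$ chances to be absorbed'' is the unconditioned picture; it does not by itself control the conditioned probability. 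Note also that at $i=r$ the first factor is $m/(m-1)>1$, so you would actually need the second factor \emph{strictly} below $2/r$, not just $\approx 2/r$; this makes an informal survival estimate inadequate.

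The paper closes exactly this gap with a switching argument: it partitions the space of histories (with $u\in A_{i,1}$ fixed) into classes $\mathcal{Z}_k$ of equal probability and, via an auxiliary bipartite graph, shows that within each class every history with $v\in A_{i,2}$ can be obtained from at most $m$ swaps of $v$, while every such history can be swapped into at least $(i-1)m\ge mr/2$ histories with $v\notin A_{i,2}$ (swapping $v$ with any vertex in $\bigcup_{j<i}A_j$ outside $\bigcup_{j<i}(N^c(a_j)\cup N^c(b_j))$ preserves both membership in the equiprobability class and the event $u\in A_{i,1}$). This double counting yields $|\mathcal{Z}'_k|/|\mathcal{Z}_k|\le 2/r$ directly, bypassing your factorisation entirely. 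Your plan becomes a proof once you supply this (or an equivalent) exchangeability argument; without it the hard range is genuinely unproved.
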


Claim~\ref{claim1} implies that~\eqref{edgeaim} holds for the edge $uv$, as required. Indeed, we have
\begin{align*}
\mathbb{P}(\exists i\text{ s.t.\ }v\in A_{i,2}\text{ and }u\in A_{i,1})
&=\sum_{i=1}^r\mathbb{P}(v\in A_{i,2}|u\in A_{i,1})\changed{\cdot} \mathbb{P}(u\in A_{i,1})
\\
&\leq (2/r)\changed{\cdot}\sum_{i=1}^r\mathbb{P}(u\in A_{i,1})\leq 2/r.
\end{align*}
In combination with the symmetric case with $u$ switched with $v$, and~\eqref{almostdone}, this implies~\eqref{edgeaim}. Therefore, it is left only to prove Claim~\ref{claim1}.

\medskip

\noindent
\emph{Proof of Claim~\ref{claim1}.}
Firstly, if $i\leq 3r/4$, we have
\[
\mathbb{P}(v\in A_{i,2}|u\in A_{i,1})\leq \mathbb{P}\big(v\in A_{i,2}|\changed{u\in A_{i,1}\land} v\notin \big(\bigcup\nolimits_{j< i} A_j\big)\cup A_{i,1}\big)=\frac{m}{2(r-i)m+m}\leq \frac{2}{r}.
\]
We therefore can assume that \changed{$i> 3r/4$}. It is plausible that $\mathbb{P}(v\in A_{i,2}|u\in A_{i,1})\leq 2/r$, as, if $u\in A_{i,1}$, then in the above process it seems far more likely that $v$ appears in $\bigcup_{j<i}(A_{j}\setminus B_j)$ than in $A_{i,2}$, as the former set is at least $(i-1)$ times as large as the latter. Conditioning on $u\in A_{i,1}$, however, might reduce the probability $v$ appears in the former set, \changed{so we will prove carefully that $\mathbb{P}(v\in A_{i,2}|u\in A_{i,1})\leq 2/r$.

We need to consider the random process up to and including the choice of $A_{i,2}$ (when we will certainly know whether $v\in A_{i,2}$ or not). In particular, we consider the space of all the possible choices that can be made during this part of the process. That is, all possible choices of
\[
X:=(A_{1,1},A_{1,2},a_1,b_1,\ldots, A_{i-1,1},A_{i-1,2},a_{i-1},b_{i-1},A_{i,1},A_{i,2}).
\]
Note that
\begin{equation}\label{ylabel} Y=(\bar{A}_{1,1},\bar{A}_{1,2},\bar{a}_1,\bar{b}_1,\ldots, \bar{A}_{i-1,1},\bar{A}_{i-1,2},\bar{a}_{i-1},\bar{b}_{i-1},\bar{A}_{i,1},\bar{A}_{i,2})
\end{equation}
is a possible value for $X$ if
\begin{itemize}
\item $\bar{A}_{1,1},\bar{A}_{1,2},\ldots,\bar{A}_{i,1},\bar{A}_{i,2}$ are disjoint sets with size $m$ in $V(G)$,
\item for each $j\in [i-1]$, $\bar{a}_j,\bar{b}_j\in \bar{A}_{j,1}\cup \bar{A}_{j,2}$, and
\item for each $j\in [i-1]$, $N^c(\bar{a}_j)\cup N^c(\bar{b}_j)\subset (\bigcup_{j'<j}(\bar{A}_{j',1}\cup \bar{A}_{j',2}))\cup \bar{A}_{j,1}$.
\end{itemize}
To calculate $\mathbb{P}(v\in A_{i,2}|u\in A_{i,1})$ we need to consider all possible such $Y$ for which, in addition, $u\in \bar{A}_{i,1}$ -- let $\mathcal{Z}$ be the set of such sequences. Furthermore, let $\mathcal{Z}'$ be the set of such sequences in $\mathcal{Z}$ for which $v\in \bar{A}_{i,2}$. We therefore have
\[
\mathbb{P}(v\in A_{i,2}|u\in A_{i,1})=\frac{\PP(X\in \mathcal{Z}')}{\PP(X\in \mathcal{Z})}.
\]

Calculating this fraction is complicated by the fact that $\PP(X=Y)$ is likely to differ across $Y\in\mathcal{Z}$. Thus, let us partition $\mathcal{Z}$ into classes $\mathcal{Z}_1,\ldots,\mathcal{Z}_\ell$, for the smallest possible $\ell$, so that there are $p_1,\ldots p_k>0$ such that, for each $k\in [\ell]$ and $Y\in \mathcal{Z}_k$, $\PP(X=Y)=p_k$.
For each $k\in [\ell]$, let $\mathcal{Z}'_k=\mathcal{Z}_k\cap \mathcal{Z}'$. We will show the following claim.

\begin{claim}\label{claim2} For each $k\in [\ell]$, $|\mathcal{Z}'_k|\leq 2|\mathcal{Z}_k|/r$.
\end{claim}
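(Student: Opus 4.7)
The plan is to prove Claim~\ref{claim2} by a swap-and-double-count argument. For each $Y\in\mathcal{Z}'_k$ I define a set $W(Y)\subset V(G)\setminus\{v\}$ of \emph{swap partners}: $w\in W(Y)$ if exchanging $v$ and $w$ in the positions they occupy in $Y$ (with the convention that, when $w\notin\bigcup_{j\le i}\bar A_j^Y$, the swap just substitutes $w$ for $v$ in $\bar A_{i,2}$ and leaves $v$ unconsidered) yields a sequence $Y_w\in \mathcal{Z}_k$ in which $v\notin \bar A_{i,2}^{Y_w}$. The key observation is that $v\in \bar A_{i,2}^Y$ forces $v$ to be a common neighbour of $\bar a_{j''}$ and $\bar b_{j''}$ for every $j''<i$ (else some $B_{j''+1}\subset \bar A_{j''+1,1}$ would place $v$ into an earlier slot), so moving $v$ into any other slot creates no new obstruction. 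The remaining conditions on $w$ that guarantee $Y_w\in\mathcal{Z}_k$ (preservation of the $|B_j|$-profile and pair-type-profile, hence of $\PP(X=Y_w)=p_k$) reduce to the symmetric requirement that
\[
J(w):=\{j''<i:w\in N^c(\bar a_{j''})\cup N^c(\bar b_{j''})\}=\emptyset,
\]
together with $w\notin\{u,\bar a_1,\bar b_1,\dots,\bar a_{i-1},\bar b_{i-1}\}$.

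To lower-bound $|W(Y)|$, the minimum-degree hypothesis $|N^c(x)|\le m/2$ gives
\[
\bigg|\bigcup_{j''<i}(N^c(\bar a_{j''})\cup N^c(\bar b_{j''}))\bigg|\le m(i-1),
\]
so at least $n-m(i-1)=(2r-i+1)m$ vertices of $V(G)$ have $J(w)=\emptyset$. After discarding the at most $m$ of these lying in $\bar A_{i,2}^Y$ and the $2i-1$ forbidden identities $u,\bar a_1,\bar b_1,\dots,\bar a_{i-1},\bar b_{i-1}$, I obtain
\[
|W(Y)|\ge (2r-i)m-2i+1\ge \frac{mr}{2},
\]
the last inequality holding for all $i\le r$ once $m$ is large enough (which is automatic in our setting since $n=2rm$). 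Conversely, each $Y'\in\mathcal{Z}_k$ is produced by at most $|\bar A_{i,2}^{Y'}|=m$ pairs $(Y,w)$, because in any preimage $w$ must sit in $\bar A_{i,2}^{Y'}$. Double-counting the pairs $(Y,w)$ with $w\in W(Y)$ therefore yields $|\mathcal{Z}'_k|\cdot\tfrac{mr}{2}\le |\mathcal{Z}_k|\cdot m$, and the claimed inequality $|\mathcal{Z}'_k|\le 2|\mathcal{Z}_k|/r$ follows.

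The main obstacle is the precise characterisation of valid swap partners. Sufficiency of $J(w)=\emptyset$ is a stage-by-stage check: at the stage $j'$ where $w$ lived in $Y$, the containment $B_{j'}\subset\bar A_{j',1}$ can only fail when the swap removes $w$ from $\bar A_{j',1}$ while $w\in B_{j'}$, which is excluded as $j'-1\notin J(w)$; and at each later stage $j''\le i$ the set $\bigcup_{j<j''}\bar A_j$ loses $w$ and gains $v$, so $B_{j''}$ can only enlarge by $w$, which is harmless precisely when $j''-1\notin J(w)$. Necessity uses the process itself: if $q\in J(w)$, the containment $B_{q+1}\subset\bar A_{q+1,1}$ forces $w$ either into $\bar A_{q+1,1}\cap B_{q+1}$ (where the swap would shrink $\bar A_{q+1,1}$ below $B_{q+1}$) or into some $\bar A_j$ with $j\le q$ (where the swap moves $w$ into $\bar A_{i,2}$, enlarging $B_{q+1}$ to include $w\notin \bar A_{q+1,1}$); in either case the resulting sequence fails the $\mathcal{Z}_k$-membership test, confirming that $J(w)=\emptyset$ is necessary.
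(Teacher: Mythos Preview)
Your approach is essentially the same swap-and-double-count argument as the paper's: both build a map from $\mathcal{Z}'_k$ to $\mathcal{Z}_k$ by exchanging $v$ with a carefully chosen partner $w$, lower-bound the number of valid partners per $Y\in\mathcal{Z}'_k$, and upper-bound the number of preimages of each $Y'\in\mathcal{Z}_k$ by $m$. The one structural difference is the pool of swap partners: the paper swaps only with vertices already considered, i.e.\ $w\in\bigcup_{j<i}(\bar A'_{j,1}\cup\bar A'_{j,2})$, and obtains the lower bound $(i-1)m$, which suffices because Claim~2 is invoked inside the proof of Claim~1 under the standing hypothesis $i>3r/4$. You instead allow any $w\in V(G)$ (including unconsidered vertices), which is a perfectly valid variant.

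There is, however, a small gap in your numerics. The assertion that $(2r-i)m-2i+1\ge mr/2$ ``once $m$ is large enough (which is automatic in our setting since $n=2rm$)'' is not justified: the identity $n=2rm$ places no lower bound on $m$, and the inequality fails, for instance, at $i=r$ whenever $m\le 3$. The fix is immediate. The $2(i-1)$ vertices $\bar a_1,\bar b_1,\dots,\bar a_{i-1},\bar b_{i-1}$ you discard are already excluded by the condition $J(w)=\emptyset$, since each $\bar a_j\in N^c(\bar a_j)$ forces $j\in J(\bar a_j)$. Hence the only genuine extra exclusions are the $m$ vertices of $\bar A_{i,2}^Y$ and at most one further vertex $u$, giving
\[
|W(Y)|\ \ge\ (2r-i+1)m - m - 1\ =\ (2r-i)m-1\ \ge\ rm-1\ \ge\ \tfrac{mr}{2}
\]
for all $m\ge 1$ and $r\ge 3$. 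With this correction your argument goes through for every $i\le r$, and in particular matches the paper's conclusion without needing the restriction $i>3r/4$.
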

If Claim~\ref{claim2} holds, then
\[
\mathbb{P}(v\in A_{i,2}|u\in A_{i,1})=\frac{\PP(X\in \mathcal{Z}')}{\PP(X\in \mathcal{Z})}=\frac{\sum_{k\in [\ell]}p_k|\mathcal{Z}'_k|}{\sum_{k\in [\ell]}p_k|\mathcal{Z}_k|}\leq \max_{k\in [\ell]}\left\{\frac{|\mathcal{Z}'_k|}{|\mathcal{Z}_k|}\right\}\leq \frac{2}{r},
\]
and hence Claim~\ref{claim1} holds. It is sufficient then to prove Claim~\ref{claim2}.

\medskip

\noindent\emph{Proof of Claim~\ref{claim2}.} Fix $k\in [\ell]$. Create an auxilliary bipartite graph $H$ with vertex classes $\mathcal{Z}_k\setminus \mathcal{Z}'_k$ and $\mathcal{Z}'_k$, where $YY'$ is an edge for $Y\in \mathcal{Z}_k\setminus \mathcal{Z}'_k$ and $Y'\in \mathcal{Z}'_k$ exactly when $Y$ can be transformed into $Y'$ by switching $v$ with some other vertex.

Note that, for each $Y\in \mathcal{Z}_k\setminus \mathcal{Z}'_k$ labelled as in~\eqref{ylabel}, $v$ has to be switched with some vertex in $\bar{A}_{i,2}$ in order to get a sequence in $\mathcal{Z}'_k$. Thus, for each $Y\in \mathcal{Z}_k\setminus \mathcal{Z}'_k$, $d_H(Y)\leq m$. We will show, for each $Y'\in \mathcal{Z}'_k$, $d_H(Y')\geq mr/2$, whence
\[
(mr/2)\cdot|\mathcal{Z}'_k| \leq \sum_{Y'\in \mathcal{Z}'_k}d_H(Y')=e(H)=\sum_{Y\in \mathcal{Z}_k\setminus \mathcal{Z}'_k}d_H(Y)\leq m|\mathcal{Z}_k\setminus \mathcal{Z}'_k|\leq m|\mathcal{Z}_k|,
\]
and thus Claim~\ref{claim2} follows.

Fix then $Y'=(\bar{A}'_{1,1},\bar{A}'_{1,2},\bar{a}'_1,\bar{b}'_1,\ldots, \bar{A}'_{i-1,1},\bar{A}'_{i-1,2},\bar{a}'_{i-1},\bar{b}'_{i-1},\bar{A}'_{i,1},\bar{A}'_{i,2})\in  \mathcal{Z}'_k$.
Recall the steps \ref{step0}--\ref{varies2} in the random process. Note that \ref{step0} is deterministic and, for each $j\leq i$, the probability of each possible choice at \ref{step1} is always the same. For each $j\leq i$, the probability of each possible choice at \ref{varies1} depends exactly on the size of
 $(N^c({a}_j)\cup N^c({b}_j))\setminus (\bigcup_{j'<j}{A}_{j'})$. For each $j< i$, the probability of each possible choice at \ref{varies2} depends on whether each of $a_j$ and $b_j$ appears in $A_{j,1}$ or $A_{j,2}$.

Thus, noting that, for each $j<i$, $\{\bar{a}'_j,\bar{b}'_j\}\subset N^c(\bar{a}'_j)\cup N^c(\bar{b}'_j)$, we can switch any vertices not in $\{u\}\cup \big(\bigcup_{j<i}( N^c(\bar{a}'_j)\cup N^c(\bar{b}'_j)\big)$ in $Y'$ and get a sequence with the same probability of occurence as $Y$.
Therefore, if we take $Y'$ and switch $v$ with any vertex in $\big(\bigcup_{j<i}(\bar{A}'_{j,1}\cup \bar{A}'_{j,2})\big)\setminus \big(\bigcup_{j<i}( N^c(\bar{a}'_j)\cup N^c(\bar{b}'_j)\big)$ then we get a sequence in $\mathcal{Z}_k\setminus \mathcal{Z}'_k$. Therefore,
\[
d_H(Y')\geq \Big|\big(\bigcup_{j<i}(\bar{A}'_{j,1}\cup \bar{A}'_{j,2})\big)\big\backslash \big(\bigcup_{j<i}(N^c(\bar{a}'_j)\cup N^c(\bar{b}'_j)\big)\Big|\geq 2(i-1)m-2(i-1)m/2= (i-1)m\geq mr/2,
\]
as required. This completes the proof of Claim~\ref{claim2}, and hence the lemma.}\hspace{4.5cm} \qed
\end{proof}
\fi


\subsection{Turning to our initial weighting}
We will now show our (appropriately-sized) random subgraph $H$ has a fractional $K_r$-decomposition using the following lemma, before later improving our methods in Section~\ref{improvements}.

\begin{lemma}\label{minusmatchingind} Let $r\geq 3$ and $\ell\ge 1$.
Let $G$ be a graph on at least $2^\ell r+2^{\ell +1}-2$ vertices so that $E(\bar{G})$ can be split into $\ell $ matchings. Then, $G$ has a fractional $K_r$-decomposition.
\ifcutfinished \qed
\else
\fi
\end{lemma}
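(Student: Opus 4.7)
The plan is to prove the lemma by induction on $\ell$. The base case $\ell=1$ is exactly Lemma \ref{minusmatching}: if $|G|\ge 2r+2$ and $\bar G$ is a single matching, then $G = K_{|G|} - M$ for that matching $M$, so Lemma \ref{minusmatching} applies directly. Note that the recurrence driving the bound, $2^\ell r + 2^{\ell+1}-2 = 2(2^{\ell-1}r+2^{\ell}-2)+2$, already foreshadows the inductive step: removing about half the vertices (namely, a transversal of one matching) leaves a graph satisfying the $(\ell-1)$-case hypothesis with a little room to spare.

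For the inductive step ($\ell\ge 2$), let $G$ have $n=|G|\ge 2^\ell r+2^{\ell+1}-2$ and fix a matching decomposition $\bar G=M_1\cup\dots\cup M_\ell$. Writing $f(k,r)=2^kr+2^{k+1}-2$, the plan is to fractionally cover $E(G)$ by induced subgraphs $G[S]\subseteq G$ with $|S|\ge f(\ell-1,r)$ and $\bar G[S]$ a union of at most $\ell-1$ matchings; by the inductive hypothesis, each such $G[S]$ has a fractional $K_r$-decomposition, and composing these with the cover weights yields a fractional $K_r$-decomposition of $G$. The natural source of such subgraphs is $S_T = V(G)\setminus T$ where $T$ is a transversal of $M_\ell$ (one endpoint chosen from each edge of $M_\ell$), since then $|S_T| = n-|M_\ell| \ge n/2 \ge f(\ell-1,r)+1$ and $M_\ell[S_T]=\emptyset$, so $\bar G[S_T]$ is the union of $\le \ell-1$ matchings obtained by restricting $M_1,\dots,M_{\ell-1}$. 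The phrase ``repeatedly applying Lemma \ref{minusmatching}'' indicates that we augment this inductive pool with subgraphs of $G$ on $\ge 2r+2$ vertices whose complement (in $G$) is itself a matching, which Lemma \ref{minusmatching} equips directly with a fractional $K_r$-decomposition.

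The main obstacle is producing nonnegative cover weights. A uniform weighting over $M_\ell$-transversals already fails: if $e_i=a_ib_i\in M_\ell$ and both $a_i$ and $b_i$ have neighbours outside $V(M_\ell)$, then the coverage constraint for an edge $a_iv$ (with $v\notin V(M_\ell)$) forces the total weight of transversals with $a_i\in S_T$ to equal $1$, while the symmetric constraint for $b_iw$ forces the complementary total to equal $1$ as well, which is inconsistent with $\sum_T\lambda_T=1$. To break this deadlock we must enlarge the family: besides transversals of $M_\ell$ we also incorporate induced subgraphs of $G$ handled directly by Lemma \ref{minusmatching} (complete-minus-matching subgraphs of $G$ of size $\ge 2r+2$, chosen around the obstructing $M_\ell$-edges using the fact that every vertex has at most $\ell$ non-neighbours), and this extra flexibility is exactly what resolves the linear system. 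The slack $n-2f(\ell-1,r)\ge 2$ in the hypothesis is precisely what is needed both to keep the inductive subgraphs $G[S_T]$ above the size threshold and to find the Lemma \ref{minusmatching} patches inside $G$; this is the source of the doubling in the recurrence $f(\ell,r)=2f(\ell-1,r)+2$.
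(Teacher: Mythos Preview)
Your base case and the inductive framework are fine, but the inductive step has a genuine gap. You correctly diagnose that a uniform weighting over $M_\ell$-transversals cannot work, and you propose to ``enlarge the family'' with complete-minus-matching patches handled directly by Lemma~\ref{minusmatching}. However, you never specify which patches to use or what weights to assign, and the claim that ``this extra flexibility is exactly what resolves the linear system'' is an assertion, not an argument. The obstruction you found is not a local one: for \emph{every} edge $a_ib_i\in M_\ell$ with both endpoints having neighbours outside $V(M_\ell)$, you need the patches to absorb exactly the right amount of weight on edges incident to $a_i$ and to $b_i$ simultaneously, and you have given no construction that does this. As written, the proof stops at the point where the real work begins.

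The paper's argument sidesteps this difficulty entirely by reading the recurrence the other way: rather than $f(\ell,r)=2f(\ell-1,r)+2$ (halve the vertex set), it uses $f(\ell,r)=f(\ell-1,2r+2)$ (double the clique size). Concretely, split $E(\bar G)=E_1\cup E_2$ with $E_1$ a single matching and $E_2$ a union of $\ell-1$ matchings. Then $G+E_1$ has complement $E_2$, so by the inductive hypothesis applied with $r$ replaced by $2r+2$ (and the same vertex set, which satisfies $|G|\ge f(\ell-1,2r+2)$), the graph $G+E_1$ has a fractional $K_{2r+2}$-decomposition. Each $(2r+2)$-clique in $G+E_1$, restricted to $G$, is a $K_{2r+2}$ minus a matching (namely the edges of $E_1$ it contains), and Lemma~\ref{minusmatching} gives each of these a fractional $K_r$-decomposition. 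Composing yields the result in two lines, with no linear system to solve.
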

\ifcutfinished
\else
\begin{proof} We will prove the lemma for all $r\ge 3$ using induction on $\ell $. When $\ell =1$, the lemma follows directly from Lemma~\ref{minusmatching}. Let $\ell >1$ and assume then that the lemma holds for $\ell -1$, and let $G$ be a graph on at least $2^\ell r+2^{\ell +1}-2=2^{\ell-1}(2r+2)+2^\ell-2$ vertices so that $E(\bar{G})$ can be split into $\ell $ matchings.

Let $E(\bar{G})=E_1\cup E_2$, where $E_1$ is a matching and $E_2$ can be split into $\ell -1$ matchings. Let $\mathcal{M}$ be the set of induced subgraphs of $G$ with $2r+2$ vertices and \changed{no two vertices in the same edge in $E_2$}. By the inductive hypothesis, $G+E_1$ has a fractional $K_{2r+2}$-decomposition. Each copy of $K_{2r+2}$ in $G+E_1$ has the same vertex set as a graph in $\mathcal{M}$, and therefore $G$ has a fractional decomposition into graphs in $\mathcal{M}$. By Lemma~\ref{minusmatching}, each subgraph in $\mathcal{M}$ has a fractional $K_r$-decomposition, and thus $G$ has as well.
\end{proof}
\fi
We can now combine Lemmas~\ref{pickrandomsubgraph} and~\ref{minusmatchingind} to give our initial weighting.
\begin{lemma}\label{fraccliqueapprox1} Let $r\geq 3$ and let $G$ be a graph with $n=(32r+62)m$ vertices and $\delta(G)\geq n-m/2$. Then, there is a set of weights $w_K$, $K\in \KK_r$, such that, for each edge $e\in E(G)$,
\begin{equation}\label{approxbound}
1-\frac 1r\leq \sum_{K\in \KK_r:e\in E(K)}w_K\leq 1.
\end{equation}
\end{lemma}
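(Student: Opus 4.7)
The plan is to apply Lemma~\ref{pickrandomsubgraph} with its parameter $r$ replaced by $r':=16r+31$. Since $n=(32r+62)m=2r'm$ and the hypothesis $\delta(G)\geq n-m/2$ is exactly what Lemma~\ref{pickrandomsubgraph} requires, this produces a probability distribution on the set $\MM$ of induced subgraphs of $G$ with $2r'=32r+62$ vertices containing a copy of $M_{r'}$, satisfying $1-\tfrac{4}{r'}\le m^2\cdot\PP(e\in E(M))\le 1$ for every $e\in E(G)$. We will then fix, for each $M\in\MM$, a fractional $K_r$-decomposition $\omega_M\colon\KK_r(M)\to[0,1]$, and define
\[
w_K \;:=\; m^2\sum_{M\in\MM:\, K\subset M}\PP(M)\cdot\omega_M(K),\qquad K\in\KK_r(G).
\]
A straightforward double-counting gives $\sum_{K\in\KK_r:\,e\in E(K)}w_K = m^2\cdot\PP(e\in E(M))$ for every $e\in E(G)$, and the bound $r'\geq 16r$ yields $1-\tfrac{4}{r'}\geq 1-\tfrac{1}{4r}\geq 1-\tfrac{1}{r}$, delivering~\eqref{approxbound}.

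The remaining step, which is the main content, is to show that every $M\in\MM$ admits a fractional $K_r$-decomposition, for which I plan to use Lemma~\ref{minusmatchingind} with $\ell=5$. Observe that $32r+62 = 2^{5}r + 2^{6}-2$, so the vertex count is precisely what that lemma needs. Since $M$ contains a spanning copy of $M_{r'}$, its non-edges form a subgraph of the complement of $M_{r'}$ in $K_{2r'}$, which consists of the $r'$ intra-pair non-edges $a_ib_i$ together with the four non-edges between every consecutive pair $\{a_i,b_i\},\{a_{i+1},b_{i+1}\}$. Hence it suffices to decompose this non-edge set into $5$ matchings.

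One matching is formed by the $r'$ intra-pair edges $\{a_ib_i:1\le i\le r'\}$. The remaining inter-pair non-edges form a bipartite graph between the odd-indexed pairs $\bigcup_{i\text{ odd}}\{a_i,b_i\}$ and the even-indexed pairs $\bigcup_{i\text{ even}}\{a_i,b_i\}$, and each vertex is incident to at most $4$ such non-edges (two to the previous pair and two to the next). By K\"onig's edge-colouring theorem for bipartite graphs, this subgraph decomposes into $4$ matchings, giving $5$ matchings in total. Applying Lemma~\ref{minusmatchingind} then produces the required $\omega_M$, and assembling the pieces as above completes the proof. The only mild obstacle is the verification of the matching decomposition, which is handled by the parity bipartition plus K\"onig; everything else is a routine combination of the two cited lemmas with the choice $r'=16r+31$.
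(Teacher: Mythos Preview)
Your proposal is correct and follows essentially the same route as the paper: apply Lemma~\ref{pickrandomsubgraph} with parameter $16r+31$, observe that $32r+62=2^5r+2^6-2$, cover $E(\bar{M}_{16r+31})$ by five matchings, and invoke Lemma~\ref{minusmatchingind} with $\ell=5$ to fractionally $K_r$-decompose each $M\in\MM$ before combining. The only difference is cosmetic: the paper simply asserts the five-matching cover of $E(\bar{M}_{16r+31})$, whereas you supply an explicit argument via the odd/even parity bipartition and K\"onig's theorem, which is a perfectly valid way to justify that step.
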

\begin{proof} Let $\mathcal{M}$ be the set of induced subgraphs of $G$ with $32r+62$ vertices which contain a copy of $M_{16r+31}$. By Lemma~\ref{pickrandomsubgraph}, we may find non-negative weights $p_M$, $M\in \MM$, so that, for each $e\in E(G)$,
\begin{equation}\label{edgeaim21}
1-\frac 1r\leq 1-\frac{4}{16r+31}\leq \sum_{M\in \MM:e\in E(M)}p_M\leq 1.
\end{equation}
The set $E(\bar{M}_{16r+31})$ can be covered by $5$ matchings, and hence, for each $M\in \MM$, $E(\bar{M})$ can as well. Therefore, by Lemma~\ref{minusmatchingind}, each $M\in \MM$ has a fractional $K_r$-decomposition, so there exist non-negative weights $w_{M,K}$, $K\in \KK_r$, such that, for each $e\in E(G)$,
\[
\sum_{K\in \KK_r:e\in E(K)}w_{M,K}=\mathbf{1}_{\{e\in E(M)\}}.
\]
For each $K\in \KK_r$, let $w_K=\sum_{M\in \MM}p_M\cdot w_{M,K}\ge 0$. Then, for each $e\in E(G)$,
\begin{equation}\label{obvious}
\sum_{K\in \KK_r:e\in E(K)}w_K=\sum_{M\in \MM}p_M\cdot\sum_{K\in \KK_r:e\in E(K)}w_{M,K}=\sum_{M\in \MM}p_M\cdot \mathbf{1}_{\{e\in E(M)\}}.
\end{equation}
Combining~\eqref{edgeaim21} and~\eqref{obvious} shows that the weights $w_K$, $K\in \KK_r$, satisfy~\eqref{approxbound}.
\end{proof}


\section{Tidying up the approximate fractional decomposition}\label{sec:correction}
The aim of this section is to prove the following lemma, which shows that a graph with an approximate fractional $K_{2r+2}$-decomposition has a fractional $K_r$-decomposition.

\begin{lemma}\label{approxintoexact} Let $r\geq 3$ and let $G$ be a graph for which there is a set of non-negative weights $w_K$, $K\in \KK_{2r+2}$, such that, for each $e\in E(G)$,
\begin{equation}\label{important}
1-\frac 1{2r+1}\leq \sum_{K\in \KK_{2r+2}:e\in E(K)}w_K\leq 1.
\end{equation}
Then, $G$ has a fractional $K_r$-decomposition.
\end{lemma}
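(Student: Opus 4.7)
The plan is to convert the weighting $w_K$ on $(2r+2)$-cliques into non-negative weights on subgraphs of the form $K-M$, where $K \in \KK_{2r+2}(G)$ and $M \subset E(K)$ is a matching (possibly empty), so that every edge of $G$ receives total weight exactly $1$. Each such $K-M$ is a $(2r+2)$-vertex graph whose non-edges form a matching, so Lemma~\ref{minusmatching} supplies a fractional $K_r$-decomposition of it, and composing these decompositions yields a fractional $K_r$-decomposition of $G$.

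To bring the hypothesis into a more convenient form, first rescale: replace $w_K$ by $\mu_K := w_K \cdot (2r+1)/(2r)$, so that each edge weight $t_e := \sum_{K\ni e}\mu_K$ lies in $[1,\,1+1/(2r)]$. Writing $t_e = 1+\lambda_e$ with $\lambda_e \in [0,\,1/(2r)]$, the task becomes to decrease the weight on every edge $e$ by exactly $\lambda_e$ while keeping everything non-negative. Split this decrease among the cliques through $e$ proportionally by setting $\alpha_{K,e} := \mu_K\lambda_e/t_e$; then $\sum_{K\ni e}\alpha_{K,e}=\lambda_e$, and the numerical identity
\[
\frac{\lambda_e}{1+\lambda_e} \;\le\; \frac{1/(2r)}{1+1/(2r)} \;=\; \frac{1}{2r+1}
\]
gives $\alpha_{K,e} \le \mu_K/(2r+1)$ for every clique $K \ni e$.

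Now fix a $1$-factorisation $E(K) = M^K_1 \cup \cdots \cup M^K_{2r+1}$ of each clique (possible since $K_{2r+2}$ is $1$-factorable). Within a single matching, say $M^K_i = \{e_1,\ldots,e_{r+1}\}$ ordered so that $\alpha_{K,e_1} \le \cdots \le \alpha_{K,e_{r+1}}$, perform the following sweep: for each $j=1,\ldots,r+1$, move weight $\alpha_{K,e_j}-\alpha_{K,e_{j-1}}$ (with the convention $\alpha_{K,e_0}=0$) from $K$ onto $K-\{e_j,e_{j+1},\ldots,e_{r+1}\}$. A quick telescoping check shows this decreases each $e_j$ by exactly $\alpha_{K,e_j}$, leaves every other edge of $K$ unchanged, and transports $\max_{e\in M^K_i}\alpha_{K,e} \le \mu_K/(2r+1)$ of weight off $K$ in total. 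Summing over the $2r+1$ matchings, the total weight removed from $K$ is at most $\mu_K$, so $K$ retains non-negative weight; every subgraph that gains weight is of the form $K$ minus a subset of one of the matchings $M^K_i$, hence a $(2r+2)$-vertex graph whose non-edges form a matching. Carrying this out independently for every $K \in \KK_{2r+2}(G)$ produces the required non-negative weighting with edge weight exactly $1$ on every edge, and applying Lemma~\ref{minusmatching} to each constituent finishes the proof.

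The only place where the constants have to line up is the inequality $\alpha_{K,e} \le \mu_K/(2r+1)$: this is exactly what forces the rescaling factor $(2r+1)/(2r)$ and, conversely, what explains the precise value $1-1/(2r+1)$ appearing in the hypothesis~\eqref{important}. The matching-sweep inside each clique is an entirely local and elementary construction, so once this bound is established and the $1$-factorisation of $K_{2r+2}$ is invoked, the remainder is bookkeeping.
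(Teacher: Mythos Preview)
Your proof is correct and follows essentially the same approach as the paper: the paper isolates the ``matching-sweep inside each clique'' as a separate Lemma~\ref{correction} (showing that any target function $f:E(K_{2r+2})\to[1-1/(2r+1),1]$ can be realised by a non-negative $K_r$-weighting) and then applies it with $f(e)=1/t_e$, whereas you inline that construction directly into the proof of Lemma~\ref{approxintoexact}. The rescaling factor, the $1$-factorisation, the telescoping redistribution onto graphs $K-M$, and the final appeal to Lemma~\ref{minusmatching} are all identical in content.
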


We then use this lemma to deduce from Lemma~\ref{fraccliqueapprox1} that any graph $G$ with minimum degree at least $(1-1/(128r+252))|G|$ has a fractional $K_r$-decomposition (see Lemma~\ref{maintheoremweakened}).
In order to prove Lemma~\ref{approxintoexact}, we first show that we can weight the $r$-cliques of $K_{2r+2}$ to achieve any particular weight over each edge, as long as these weights lie in $[1-1/(2r+1),1]$. The proof of the following lemma was sketched in Section~\ref{sketch:correction}.

\begin{lemma}\label{correction} Let $r\geq 3$ and $f:E(K_{2r+2})\to [1-1/(2r+1),1]$. Let $\KK_r=\KK_r(K_{2r+2})$. Then, there is a set of non-negative weights $w_K$, $K\in \KK_r$, such that, for each $e\in E(K_{2r+2})$,
\[
\sum_{K\in \KK_r:e\in E(K)}w_K=f(e).
\]
\ifcutfinished \qed
\else
\fi
\end{lemma}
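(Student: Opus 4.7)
The plan is to combine a $1$-factorization of $K_{2r+2}$ with several applications of Lemma~\ref{minusmatching} to sub-matchings. Since $K_{2r+2}$ has an even number of vertices, $E(K_{2r+2})$ decomposes into $2r+1$ perfect matchings $M_1,\ldots,M_{2r+1}$. For each $i\in[2r+1]$ and each subset $S\subseteq M_i$ (automatically a matching), Lemma~\ref{minusmatching} produces a non-negative weighting $\omega^{i,S}$ of $\KK_r(K_{2r+2})$ that is a fractional $K_r$-decomposition of $K_{2r+2}-S$; in particular $S=\emptyset$ gives a fractional $K_r$-decomposition of $K_{2r+2}$ itself. The point of allowing arbitrary sub-matchings of $M_i$ is that $\omega^{i,S}$ puts total weight exactly $1$ over every edge outside $S$ and total weight $0$ over each edge in $S$, which gives a controlled way to reduce the weight on any chosen subset of edges of $M_i$.

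Writing $\lambda_e=1-f(e)\in[0,1/(2r+1)]$ for each $e\in E(K_{2r+2})$, I would set
\[
w_K \;=\; \frac{1}{2r+1}\sum_{i=1}^{2r+1}\sum_{S\subseteq M_i}\mu_i(S)\cdot\omega^{i,S}_K,
\]
where, for each $i$, $\mu_i$ is the probability distribution on subsets of $M_i$ in which each edge $e\in M_i$ is independently included in $S$ with probability $(2r+1)\lambda_e\in[0,1]$. This is manifestly a non-negative weighting of $\KK_r$. To check the target weight over an edge $e\in M_j$: each colour class $i\ne j$ contributes $1$ to the inner sum (since $e\notin M_i\supseteq S$, so $e$ is always an edge of $K_{2r+2}-S$), giving $2r$ in total; the colour class $j$ contributes $\PP_{S\sim\mu_j}[e\notin S]=1-(2r+1)\lambda_e$. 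Dividing by $2r+1$ yields the weight over $e$ as $1-\lambda_e=f(e)$, as required.

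There is essentially no obstacle beyond assembling these pieces. The one point worth flagging is the choice to take each $\mu_i$ to be a product of independent Bernoullis: this is legitimate precisely because $M_i$ is a matching, so the events ``$e\in S$'' for different $e\in M_i$ can be resolved by independent coin flips and every outcome $S\subseteq M_i$ remains a matching to which Lemma~\ref{minusmatching} applies. The hypothesis $f(e)\ge 1-1/(2r+1)$ is exactly what makes the Bernoulli parameters $(2r+1)\lambda_e$ lie in $[0,1]$, and it is also the sharp range of target values that this construction can hit.
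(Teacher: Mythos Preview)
Your proof is correct and takes essentially the same approach as the paper: both use a $1$-factorization of $K_{2r+2}$ together with Lemma~\ref{minusmatching} applied to sub-matchings of each colour class, choosing the weights on sub-matchings so that each edge $e$ is omitted within its own class with marginal probability $(2r+1)(1-f(e))$. The only cosmetic difference is that you realise this marginal via independent Bernoulli inclusions on each $M_i$, whereas the paper sorts the edges of $M_i$ by $f$-value and uses the nested sub-matchings $\{e_{i,1},\ldots,e_{i,j}\}$ with telescoping weights $f(e_{i,j+1})-f(e_{i,j})$.
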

\ifcutfinished
\else
\begin{proof} As is well-known, we can find disjoint matchings $M_1,\ldots,M_{2r+1}$ such that $\bigcup_iM_i=E(K_{2r+2})$.
Let $\MM$ be the set of matchings $M\subset E(K_{2r+2})$. By Lemma~\ref{minusmatching}, for each $M\in\MM$, we can choose weights $w_{M,K}\geq 0$, $K\in \KK_r$, so that, for each $e\in E(K_{2r+2})$,
\begin{equation}\label{fri2}
\sum_{K\in \KK_r:e\in E(K)}w_{M,K}=\mathbf{1}_{\{e\notin M\}}.
\end{equation}

Now, for each $i\in[2r+1]$, label the edges in $M_i$ as $e_{i,1},\ldots,e_{i,r+1}$, so that $f(e_{i,1})\leq f(e_{i,2})\leq\ldots\leq f(e_{i,r+1})$. For each $M\in \MM$, let
\begin{equation}\label{finaleq}
w_{M}=\left\{
\begin{array}{ll}
f(e_{i,j+1})-f(e_{i,j}) &\text{ if }M=\{e_{i,1},\ldots,e_{i,j}\}\text{ for some }i\in[2r+1],j\in[r]
\\
1-f(e_{i,r+1}) & \text{ if }M=\{e_{i,1},\ldots,e_{i,r+1}\}\text{ for some }i\in[2r+1]
\\
1-\sum_{i\in [2r+1]}(1-f(e_{i,1})) & \text{ if }M=\emptyset
\\
0 & \text{ otherwise.}
\end{array}
\right.
\end{equation}
As $f(e_{i,1})\geq 1-1/(2r+1)$ for each $i\in [2r+1]$, we have $w_\emptyset\geq 0$. As $f(e_{i,r+1})\leq 1$, and due to the ordering of the edges in each matching $M_i$, the weights $w_M$, $M\in \MM$, give a non-negative weighting of the matchings in $\MM$. For each $K\in \KK_r$, let
\begin{equation}\label{fri1}
w_K=\sum_{M\in \MM}w_M\cdot w_{M,K},
\end{equation}
so that $w_K$, $K\in \KK_r$, is a non-negative weighting of $\KK_r$.

Note that, for each $i\in[2r+1]$ and $j\in [r+1]$, we have
\begin{equation}\label{fri3}
\sum_{j'< j}w_{\{e_{i,1},\ldots,e_{i,j'}\}}=f(e_{i,j})-f(e_{i,1}),
\end{equation}
and, for each $i\in[2r+1]$, we have
\begin{equation}\label{fri4}
\sum_{j\in[r+1]}w_{\{e_{i,1},\ldots,e_{i,j}\}}=1-f(e_{i,1}).
\end{equation}
Therefore, for each $e\in E(G)$, letting $i$ and $j$ be such that $e=e_{i,j}$, we have
\begin{align*}
\sum_{K\in \KK_r:e\in E(K)}w_K\hspace{0.1cm}&\hspace{-0.1cm}\overset{\eqref{fri1}}{=}\sum_{M\in \MM}w_M\cdot\sum_{K\in \KK_r:e\in E(K)}w_{M,K}
\overset{\eqref{fri2}}{=}\sum_{M\in \MM}w_M\cdot\mathbf{1}_{\{e\notin M\}} \\
\hspace{-2cm}&= w_{\emptyset}+\sum_{i'\neq i}\sum_{j'\in[r+1]}w_{\{e_{i',1},\ldots,e_{i',j'}\}}
+\sum_{j'< j}w_{\{e_{i,1},\ldots,e_{i,j'}\}}
\\
&\hspace{-0.45cm}\overset{\eqref{fri4},\eqref{fri3}}{=}w_\emptyset+\sum_{i'\neq i}(1-f(e_{i',1}))+f(e_{i,j})-f(e_{i,1})\\
&\hspace{-0.1cm}\overset{\eqref{finaleq}}{=} 1-(1-f(e_{i,1}))+f(e_{i,j})-f(e_{i,1})= f(e_{i,j}),
\end{align*}
as required.
\end{proof}
\fi

We can now prove Lemma~\ref{approxintoexact} by using Lemma~\ref{correction} on each $(2r+2)$-clique of $G$ simultaneously.

\begin{proof}[Proof of Lemma~\ref{approxintoexact}.]
\ifcutfinished
\else
For each $e\in E(G)$, let $z_e=(1+1/(2r))\cdot\sum_{K\in \KK_{2r+2}:e\in E(K)}w_K$, so that, using~\eqref{important}, we have $1-1/(2r+1)\leq 1/z_e \leq 1$. For each $K\in \KK_{2r+2}$, using Lemma~\ref{correction}, take non-negative weights $w_{K,K'}$, $K'\in \KK_r=\KK_r(G)$, so that, for each $e\in E(G)$,
\begin{equation}\label{referback}
\sum_{K'\in \KK_r:e\in E(K')}w_{K,K'}=(1/z_e)\cdot \mathbf{1}_{\{e\in E(K)\}}.
\end{equation}

For each $K'\in \KK_r$, let $w_{K'}=\sum_{K\in \KK_{2r+2}}(1+1/(2r))w_{K}\cdot w_{K,K'}$, so that we have a non-negative weighting of $\KK_r$. Then, for each $e\in E(G)$, we have
\begin{align*}
\sum_{K'\in \KK_r:e\in E(K')}w_{K'}&=\sum_{K\in \KK_{2r+2}}(1+1/(2r))w_{K}\cdot\sum_{K'\in \KK_r:e\in E(K')}w_{K,K'}
\\
&\hspace{-0.12cm}\overset{\eqref{referback}}{=}\sum_{K\in \KK_{2r+2}}(1+1/(2r))w_K\cdot (1/z_e)\cdot \mathbf{1}_{\{e\in E(K)\}}
\\
&=\Big((1+1/(2r)) \sum_{K\in \KK_{2r+2}:e\in E(K)}w_K\Big)/z_e=1.
\end{align*}
Thus, the weights $w_{K'}$, $K'\in \KK_r$, form a fractional $K_r$-decomposition of $G$.
\fi
\end{proof}

We can now conclude from Lemma~\ref{fraccliqueapprox1} and Lemma~\ref{approxintoexact} the following weakened form of Theorem~\ref{maintheorem}.

\begin{lemma}\label{maintheoremweakened}
Let $r\geq 3$. If a graph $G$ has minimum degree at least $(1-1/(128r+252))|G|$, then $G$ has a fractional $K_r$-decomposition.
\end{lemma}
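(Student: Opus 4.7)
The plan is to obtain Lemma~\ref{maintheoremweakened} by chaining the two main technical lemmas already proved in the paper: apply Lemma~\ref{fraccliqueapprox1} (with the role of $r$ played by $2r+2$) to produce an approximate fractional $K_{2r+2}$-decomposition, and then feed this into Lemma~\ref{approxintoexact} to convert it to a fractional $K_r$-decomposition. The minimum degree bound $1-1/(128r+252)$ in the statement is engineered precisely for this: setting $m=|G|/(64r+126)$, the hypothesis $\delta(G)\ge (1-1/(128r+252))|G|$ becomes $\delta(G)\ge |G|-m/2$, which is exactly the hypothesis of Lemma~\ref{fraccliqueapprox1} applied at clique-size $2r+2$, since $32(2r+2)+62=64r+126$. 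The weights produced then satisfy $1-\tfrac{1}{2r+2}\le \sum_{K\in\KK_{2r+2}:e\in E(K)}w_K\le 1$, and since $1-\tfrac{1}{2r+1}\le 1-\tfrac{1}{2r+2}$, the hypothesis of Lemma~\ref{approxintoexact} holds for free.

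The one wrinkle is the divisibility requirement that $|G|$ be of the form $(64r+126)m$. I would deal with this by the standard blow-up trick: let $t$ be a positive integer such that $t|G|$ is divisible by $64r+126$, and let $G^{(t)}$ be the graph obtained by replacing each vertex $v\in V(G)$ by an independent set of $t$ twin copies and joining two twin copies by an edge whenever the corresponding vertices are adjacent in $G$. Then $|G^{(t)}|=t|G|$ and $\delta(G^{(t)})=t\delta(G)\ge (1-1/(128r+252))|G^{(t)}|$, so the previous paragraph gives a fractional $K_r$-decomposition $\{w'_{K'}\}_{K'\in\KK_r(G^{(t)})}$ of $G^{(t)}$.

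To recover a fractional $K_r$-decomposition of $G$ itself, I would let $\pi:V(G^{(t)})\to V(G)$ be the projection sending each twin copy to the vertex of $G$ it came from, observe that $\pi$ restricted to any $r$-clique of $G^{(t)}$ is injective (twins are non-adjacent), and set
\[
w_K\;=\;\frac{1}{t^2}\sum_{K'\in\KK_r(G^{(t)}):\,\pi(V(K'))=V(K)}w'_{K'}
\]
for each $K\in \KK_r(G)$. For any edge $e=uv\in E(G)$, the $t^2$ edges $e'\in E(G^{(t)})$ projecting to $e$ each satisfy $\sum_{K':e'\in E(K')}w'_{K'}=1$, while every $r$-clique $K'$ of $G^{(t)}$ contains at most one such $e'$; summing over the $t^2$ edges and interchanging the order of summation then gives $\sum_{K\in\KK_r(G):\,e\in E(K)}w_K=1$, as required.

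I do not anticipate a substantive obstacle here: both Lemma~\ref{fraccliqueapprox1} and Lemma~\ref{approxintoexact} do the heavy lifting, and the only step demanding care is verifying the arithmetic $N-m/2=(1-1/(128r+252))N$ with $N=(64r+126)m$, together with the routine blow-up/projection bookkeeping above. The one sanity check worth flagging is that the approximation quality $1-1/(2(2r+2))$ output by Lemma~\ref{fraccliqueapprox1} is indeed inside the window $[1-1/(2r+1),1]$ demanded by Lemma~\ref{approxintoexact}, which it is.
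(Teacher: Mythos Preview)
Your proposal is correct and essentially identical to the paper's proof: both apply Lemma~\ref{fraccliqueapprox1} with clique size $2r+2$ to a blow-up of $G$, feed the resulting approximate weighting into Lemma~\ref{approxintoexact}, and then project the fractional $K_r$-decomposition back down to $G$. The only cosmetic difference is that the paper blows up every vertex exactly $64r+126$ times (so that $m=|G|$), whereas you blow up by a minimal $t$ making $t|G|$ divisible by $64r+126$; also, your final sanity check has a typo ($1-1/(2(2r+2))$ should read $1-1/(2r+2)$, as you wrote earlier).
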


\begin{proof}
\ifcutfinished
\else Let $m=|G|$ and form a graph $G'$ by copying each vertex of $G$ $64r+126$ times, where two vertices in $G'$ have an edge between them if and only if the two original vertices did in $G$. Then, we have $\delta(G')\geq |G'|-(64r+126)|G|/(128r+252)=|G'|-m/2$.

By Lemma~\ref{fraccliqueapprox1}, there is a set of weights $w_K$, $K\in \KK_{2r+2}:= \KK_{2r+2}(G')$, such that, for each $e\in E(G')$,
\begin{equation*}\label{approxweights1}
1-\frac 1{2r+2}\leq \sum_{K\in \KK_{2r+2}:e\in E(K)}w_K\leq 1.
\end{equation*}
Thus, by Lemma~\ref{approxintoexact}, $G'$ has a fractional $K_r$-decomposition.

Each $r$-clique in $G'$ has vertices which are copied from some shared $r$-clique in $G$. Let $\tau:\KK_r(G')\to\KK_r(G)$ be the natural projection so that, for each $K\in \KK_r(G')$, $V(K)$ is a set of copies of vertices in $V(\tau(K))$. For each $K\in \KK_r(G)$, letting $w_K=(1/k^2)\cdot\sum_{K'\in \KK_r(G'):\tau(K')=K}w_{K'}$, with $k=64r+126$, then forms a fractional $K_r$-decomposition of $G$.
\fi
\end{proof}


\section{Improving on Lemma~\ref{fraccliqueapprox1}}\label{improvements}
To improve on Lemma~\ref{fraccliqueapprox1}, we essentially consider the following question: For how small a value of $\ell$ can we show that any graph $G$ with $2\ell$ vertices containing a copy of $M_{\ell}$ has a $K_r$-decomposition (see Definition~\ref{Mrdefn})? For Lemma~\ref{fraccliqueapprox1}, where we used $\ell\geq 16r+31$, we simply used Lemma~\ref{minusmatching} applied several times (via Lemma~\ref{minusmatchingind}). In this section, we will improve this to use $\ell\geq 9r+8$ in Lemma~\ref{Mdecomp} (taking also that $\ell$ is even to simplify the proof). We will first define a new graph $W_k$ and show that, for sufficiently large $k$, any graph with a spanning copy of $W_k$ has a fractional $K_r$-decomposition.


\begin{definition} \changed{Let $W_k$ be a complete $k$-partite graph with classes of size $4$ on the vertex set $[4k]$.}
\end{definition}

\begin{lemma}\label{Wfracdecomp} Let $r\geq 3$, $k\geq (3r+2)/2$, and let $G$ be a graph with $4k$ vertices which contains a copy of $W_{k}$. Then, $G$ has a fractional $K_r$-decomposition.
\ifcutfinished \qed
\else
\fi
\end{lemma}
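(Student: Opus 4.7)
The plan is to express a fractional $K_r$-decomposition of $G$ as a non-negative combination $s\alpha+t\beta$ of two weightings on $\KK_r(G)$, where $\alpha$ is obtained by decomposing an auxiliary ``pair-graph'' and covers every inter-class edge perfectly but over-shoots on intra-class edges, and $\beta$ comes from a direct fractional $K_r$-decomposition of $W_k$ and contributes only to inter-class edges; the mixing parameters will then balance every edge to weight~$1$.

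I will construct $\alpha$ as follows. For each class $V_i$ of size $4$, uniformly and independently pick one of the three perfect matchings of the $K_4$ on $V_i$, giving a partition of $V(G)$ into $2k$ pairs. Regarding those pairs as the vertices of an auxiliary graph $G^{\ast}$ in which two pairs are adjacent iff they come from different classes, one gets $G^{\ast}\cong K_{2k}-(\text{matching of $k$ edges})$. Since $k\geq (3r+2)/2\geq r+2$ (for $r\geq 3$), Lemma~\ref{minusmatching} applied with $r+1$ in place of $r$ yields a fractional $K_{r+1}$-decomposition $\{w_K\}_{K\in\KK_{r+1}(G^{\ast})}$. Each such $K_{r+1}$ lifts to a set $S\subseteq V(G)$ of size $2r+2$ with $|S\cap V_i|\in\{0,2\}$, so $G[S]$ is $K_{2r+2}$ minus a matching, and a second application of Lemma~\ref{minusmatching} gives a fractional $K_r$-decomposition of $G[S]$; combining these, then averaging the resulting $\KK_r(G)$-weighting over the random matching choices, produces $\alpha$.

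The coverage of $\alpha$ then works out as follows. Every inter-class edge is covered with weight $1$, because under any matching choice the corresponding edge of $G^{\ast}$ is covered with weight $1$ in the $K_{r+1}$-decomposition. Every intra-class edge $\{u,v\}\subseteq V_i$ is covered with weight $\tfrac{1}{3}\cdot\tfrac{2(k-1)}{r}=\tfrac{2(k-1)}{3r}$: the event ``$\{u,v\}$ is the pair chosen inside $V_i$'' has probability $1/3$, and conditional on it the coverage equals $\sum_{K\ni P}w_K=2(k-1)/r$ via the double-counting identity $r\sum_{K\ni P}w_K=\deg_{G^{\ast}}(P)=2(k-1)$ at any vertex $P$ of $G^{\ast}$. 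For $\beta$, I will take the fractional $K_r$-decomposition of $W_k$ obtained by assigning equal weight to every $K_r$ of $W_k$ and then normalising; by the vertex-transitivity of $W_k$ and the count $\binom{k-2}{r-2}4^{r-2}$ of $K_r$'s through each edge, this is well-defined for $k\geq r$, gives weight exactly $1$ on every inter-class edge of $G$, and weight $0$ on every intra-class edge (as each $K_r$ of $W_k$ uses at most one vertex per class).

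Finally, setting $s=3r/(2(k-1))$ and $t=1-s$, the hypothesis $k\geq (3r+2)/2$ is exactly the condition $s\in (0,1]$ and $t\geq 0$, and $s\alpha+t\beta$ then gives weight $s\cdot\tfrac{2(k-1)}{3r}+t\cdot 0=1$ on every intra-class edge and weight $s+t=1$ on every inter-class edge. The main point to verify carefully will be the coverage computation for $\alpha$ — in particular that averaging over the random matching choices really does yield uniform coverage across \emph{every} intra-class edge of $G$ (and across every inter-class edge) — which relies on the fact that the three perfect matchings of $K_4$ partition its six edges evenly, combined with the generic vertex-degree identity above that holds for \emph{any} fractional $K_{r+1}$-decomposition of $G^{\ast}$ regardless of whether that decomposition is itself symmetric.
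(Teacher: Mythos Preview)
Your proof is correct, and it takes a route that is genuinely different from the paper's, though the two share the same skeleton of distinguishing intra-class from inter-class edges and leaning on Lemma~\ref{minusmatching}.

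The paper argues more directly via a single random selection: choose $r+1$ of the $k$ classes uniformly, pick a random $2$-subset from each, and set $M=G[V_0]$; then $M$ is $K_{2r+2}$ minus a matching, so Lemma~\ref{minusmatching} applies once. A short probability computation shows every inter-class edge lands in $M$ with probability $\binom{r+1}{2}/\bigl(4\binom{k}{2}\bigr)$ and every intra-class edge with the larger probability $(r+1)/(6k)$, the inequality between these two being exactly $k\ge(3r+2)/2$. The surplus on intra-class edges is then trimmed by deleting such edges from $M$ (which keeps $M$ in the ``clique minus a matching'' family), and one normalises.

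Your argument instead produces two clean building blocks and mixes them linearly: the weighting $\alpha$, built by pairing inside classes, passing to the pair-graph $G^\ast\cong K_{2k}-\text{(perfect matching)}$, applying Lemma~\ref{minusmatching} first with $r+1$ and then with $r$ on each lifted $(2r+2)$-set, and averaging; and the weighting $\beta$, the symmetric fractional $K_r$-decomposition of $W_k$. The vertex-degree identity $r\sum_{K\ni P}w_K=\deg_{G^\ast}(P)=2(k-1)$ that you invoke is valid for \emph{any} fractional $K_{r+1}$-decomposition of $G^\ast$, so no symmetry of that decomposition is needed, and the averaging over the three matchings of each $K_4$ does give every intra-class edge of $G$ the same $\alpha$-weight $2(k-1)/(3r)$, as you note. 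The hypothesis $k\ge(3r+2)/2$ then appears transparently as the condition $t=1-3r/(2(k-1))\ge 0$. One small check you left implicit but which does go through: applying Lemma~\ref{minusmatching} with $r+1$ in place of $r$ requires $2k\ge 2(r+1)+2$, i.e.\ $k\ge r+2$, and this is implied by $k\ge(3r+2)/2$ for all $r\ge 2$.

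What each approach buys: the paper's single-process argument is shorter and uses Lemma~\ref{minusmatching} only once per sampled subgraph, with the threshold emerging from a probability comparison. Your two-weighting approach is more algebraic, invokes Lemma~\ref{minusmatching} at two nested levels, and makes the role of the threshold $k\ge(3r+2)/2$ especially transparent as a convexity constraint; it also avoids the ``delete surplus edges and re-randomise'' step entirely.
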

\ifcutfinished
\else
\begin{proof} Using that $G$ contains a copy of $W_k$, divide the set $V(G)$ into $A_1,\ldots,A_{k}$ so that each set $A_i$ has size 4 and $G$ contains all the edges between different sets $A_i$. Let $E\subset E(G)$ be the set of edges of $G$ which lie within some set $A_i$. Let $\MM$ be the set of all subgraphs $M\subset G$ with $|M|=2r+2$ vertices and $\delta(M)\geq |M|-2$. By picking $M\in \MM$ randomly, we will weight the subgraphs in $\MM$ so that each edge in $G$ is given total weight $1$. As, by Lemma~\ref{minusmatching}, each subgraph $M\in \MM$ has a fractional $K_r$-decomposition, this implies that $G$ has a fractional $K_r$-decomposition.

Pick $I\subset [k]$ randomly so that $|I|=r+1$, and, starting with $V_0=\emptyset$, for each $i\in I$, pick uniformly and independently a random subset of $A_i$ with size 2 and add it to $V_0$. Let $M=G[V_0]$. Note that $M\in \MM$.

If $e\in E(G)\setminus E$ lies between $A_i$ and $A_j$, then
\[
\PP(e\in E(M))=\PP(i,j\in I)\cdot 1/4=\changed{\frac{\binom{r+1}{2}}{4\binom{k}{2}}},
\]
and, if $e\in E$ lies within $A_i$, then
\[
\PP(e\in E(M))=\PP(i\in I)\cdot 1/6=(r+1)/(6k)\geq \changed{\frac{\binom{r+1}{2}}{4\binom{k}{2}}},
\]
as $k\geq (3r+2)/2$.
Note that, if we delete any set of edges in $E$ from $M$, then the resulting subgraph is still in $\MM$. Therefore, we can alter our probability distribution to get a random subgraph $M'\in \MM$ so that, for every $e\in E(G)$, we have $\PP(e\in E(M'))=\binom{r+1}{2}/4\binom{k}{2}$. Thus, normalising appropriately, we can find weights $w_M$, $M\in \MM$, so that, for each $e\in E(G)$, $\sum_{M\in \MM:e\in E(M)}w_M=1$. As every graph in $\MM$ has a fractional $K_r$-decomposition, $G$ itself then has a fractional $K_r$-decomposition.
\end{proof}
\fi

We can now show that, if $\ell \geq (9r+8)/2$, then any graph containing a spanning copy of $M_{2\ell}$ has a fractional $K_r$-decomposition. We do this by fractionally decomposing such a graph into subgraphs with a spanning copy of $W_k$ or $\bar{C}_{4k}$ (for some appropriate $k$), where $C_{4k}$ is the cycle on $4k$ vertices. By using Lemma~\ref{Wfracdecomp} and Lemma~\ref{minusmatchingind} respectively to find a fractional $K_r$-decomposition of these subgraphs, we will get a fractional $K_r$-decomposition of the original graph.

\begin{lemma}\label{Mdecomp}
Let $r\geq 3$ and $\ell\geq (9r+8)/2$. If $G$ is a graph with $4\ell$ vertices which contains a copy of $M_{2\ell}$, then $G$ has a fractional $K_r$-decomposition.
\ifcutfinished \qed
\else
\fi
\end{lemma}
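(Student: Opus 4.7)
\medskip
\noindent\textbf{Proof plan for Lemma~\ref{Mdecomp}.}
The approach, as foreshadowed in the preamble, is to fractionally decompose $G$ into a family of induced subgraphs $G[T]$ (with various $T\subseteq V(G)$), each of which contains a spanning copy of $W_k$ or $\bar{C}_{4k}$ for some appropriate $k$. Applying Lemma~\ref{Wfracdecomp} (which needs $k\geq(3r+2)/2$) to the $W_k$-type pieces and Lemma~\ref{minusmatchingind} with $2$ matchings (which needs $4k\geq 4r+6$, since $E(C_{4k})$ is a union of two matchings) to the $\bar{C}_{4k}$-type pieces produces a fractional $K_r$-decomposition of each piece, and combining these as in the proof of Lemma~\ref{fraccliqueapprox1} yields a fractional $K_r$-decomposition of $G$. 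The hypothesis $\ell\geq(9r+8)/2$ will comfortably clear both thresholds.

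Fix a labeling $V(G)=\{a_i,b_i\}_{i=1}^{2\ell}$ coming from the given copy of $M_{2\ell}$, so that the non-edges of $G$ are contained in the non-edges of $M_{2\ell}$: the within-pair edges $a_ib_i$, and the four-edge bundles between consecutive pairs $\{a_i,b_i\}$ and $\{a_{i+1},b_{i+1}\}$. I would then exploit two natural families of pieces. For any set $P\subseteq\{1,\dots,2\ell\}$ of pairwise non-consecutive pair indices, the induced subgraph $G[T_P]$ on $T_P=\bigcup_{i\in P}\{a_i,b_i\}$ has only within-pair non-edges, so pairing the indices of $P$ into $|P|/2$ groups of two yields a $4$-partition with all cross-class edges present, giving a spanning copy of $W_{|P|/2}$. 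For any transversal $T_\sigma=\{\sigma_i\}_{i=1}^{2\ell}$ with $\sigma_i\in\{a_i,b_i\}$, the non-edges of $G[T_\sigma]$ lie on the Hamiltonian path $\sigma_1\sigma_2\cdots\sigma_{2\ell}$, which extends trivially to a Hamiltonian cycle, so $G[T_\sigma]$ contains a spanning copy of $\bar{C}_{2\ell}$.

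I would then assemble the fractional decomposition by weighting these two families so that every edge of $G$ receives total weight $1$: within-pair edges can only be covered by the first family, between-consecutive-pair edges (if present in $G$) only by the second family, and between-non-consecutive-pair edges by both. A natural plan is first to pin down the weights on the transversal family so that every between-consecutive-pair edge is covered with total weight $1$, and then to pin down the weights on the pair-union family so that every within-pair edge reaches weight $1$, while simultaneously absorbing the spillover contribution that the transversal family makes to between-non-consecutive-pair edges. Some additional freedom may be needed, which can be introduced either by varying $|P|$ across several sizes or by throwing in a third family of pieces with a complementary covering profile.

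The main obstacle I expect is precisely this balancing step: the two families cover between-non-consecutive-pair edges in addition to their ``dedicated'' edge types, so the simple uniform assignment within each family does not immediately produce a consistent weighting, and one must either use a non-uniform random distribution (in the spirit of Lemma~\ref{pickrandomsubgraph}) or introduce an auxiliary family of pieces to mediate the conflict. The lower bound $\ell\geq(9r+8)/2$ is what guarantees enough combinatorial slack to make this balancing work with non-negative weights, and also to handle the boundary effect caused by the pair indices forming a path (rather than a cycle) when computing inclusion probabilities.
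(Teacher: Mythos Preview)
Your high-level plan matches the paper's: decompose $G$ into induced pieces carrying spanning copies of $W_k$ or of $\bar C_{4k}$, then apply Lemma~\ref{Wfracdecomp} and Lemma~\ref{minusmatchingind}. But the two concrete families you name cannot be balanced with non-negative weights, and the fixes you float do not help. A uniformly random full transversal contains every between-pair edge of $G$---consecutive or not---with probability exactly $1/4$. Hence once Family~2 is scaled to put total weight $1$ on each between-consecutive-pair edge, it already puts weight $1$ on every between-non-consecutive-pair edge too. Family~1 would then need to contribute $0$ to between-non-consecutive-pair edges while contributing $1$ to within-pair edges, which is impossible: any positive weighting of your pair-union pieces covers both types with positive weight. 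Varying $|P|$ does nothing to this ratio, and no evident third family has the ``complementary profile'' you would need (anything that covers between-consecutive-pair edges more heavily than between-non-consecutive ones would already have to depart from both of your constructions).

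The paper resolves this by replacing your pair/transversal dichotomy with a coarser block structure. Set $A_j=\{a_{2j-1},b_{2j-1},a_{2j},b_{2j}\}$ for $j\in[\ell]$ and $k=\lceil(3r+2)/2\rceil$. Pick $I\subset[\ell]$ uniformly with $|I|=2k$, pick $i\in\{0,1,2\}$ uniformly, and from each $A_j$ with $j\in I$ select one of two $2$-subsets according to $i$: one of the two original pairs ($i=0$), one of the two same-letter pairs $\{a_{2j-1},a_{2j}\}$, $\{b_{2j-1},b_{2j}\}$ ($i=1$), or one of the two cross pairs $\{a_{2j-1},b_{2j}\}$, $\{b_{2j-1},a_{2j}\}$ ($i=2$). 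The resulting induced $4k$-vertex subgraph contains $W_k$ when $i=0$ and $\bar C_{4k}$ when $i\in\{1,2\}$. Now edges fall into only two classes: between-block edges, included with probability $\binom{2k}{2}/\bigl(4\binom{\ell}{2}\bigr)$, and within-block edges, included with the larger probability $k/(3\ell)$; deleting surplus within-block edges (which keeps the piece in the target family) equalises them. Grouping the pairs into $4$-vertex blocks is exactly the idea your plan is missing: it merges within-pair edges with the between-consecutive-pair edges inside each block into a single class that all three piece-types cover symmetrically, collapsing your unbalanceable three-way split to a two-way one that the deletion trick handles immediately.
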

\ifcutfinished
\else
\begin{proof}
Let $k=\lceil (3r+2)/2\rceil$. Label $V(G)$ as $V=\{a_1,b_1,\ldots,a_{2\ell},b_{2\ell}\}$ so that $G$ has all edges between sets $\{a_i,b_i\}$ and $\{a_j,b_j\}$ if $i\notin \{j-1,j,j+1\}$, \changed{which is} possible as $G$ contains a copy of $M_{2\ell}$. Let $\MM$ be the set of all subgraphs $H\subset G$ with $|H|=4k$ which contain a copy of either $\bar{C}_{4k}$ or $W_{k}$.

Note that, for any graph $H$ with $4k\ge 4r+6$ vertices which contains a copy of $\bar{C}_{4k}$, the set $E(\bar{H})$ can be covered by two matchings. Therefore, by Lemma~\ref{minusmatchingind}, any such graph $H$ has a fractional $K_r$-decomposition. In combination with Lemma~\ref{Wfracdecomp}, then, each graph in $\MM$ has a fractional $K_r$-decomposition.

For each $j\in [\ell]$, let $A_j=\{a_{2j-1},b_{2j-1},a_{2j},b_{2j}\}$.
For each $i\in \{0,1,2\}$, pick a random induced subgraph $C_i$ of $G$ with $|C_i|=4k$ according to the following procedure:
\begin{itemize}
\item Select a subset $I\subset [\ell]$ with $|I|=2k$ uniformly and independently.
\item For each $j\in I$,
\begin{itemize}
\item if $i=0$, let $V(C_{i})\cap A_j= \{a_{2j-1},b_{2j-1}\}$ or $\{a_{2j},b_{2j}\}$ with probability $1/2$,
\item if $i=1$, let $V(C_{i})\cap A_j= \{a_{2j-1},a_{2j}\}$ or $\{b_{2j-1},b_{2j}\}$ with probability $1/2$, and
\item if $i=2$, let $V(C_{i})\cap A_j= \{a_{2j-1},b_{2j}\}$ or $\{b_{2j-1},a_{2j}\}$ with probability $1/2$.
\end{itemize}
\item For each \changed{$j\notin I$}, let $V(C_{i})\cap A_j=\emptyset$.
\end{itemize}
Then, pick $i\in \{0,1,2\}$ uniformly and independently, and let $C=C_{i}$. Note that $C_0$ contains a copy of $W_k$ and $C_1$ and $C_2$ both contain a copy of $\bar{C}_{4k}$. Therefore, $C\in \MM$.

Let $E\subset E(G)$ be the set of edges of $G$ which each lie within some set $A_i$. If $e\in E(G)\setminus E$ lies between $A_i$ and $A_j$, then
\[
\PP(e\in E(C))=\PP(i,j\in I)\cdot 1/4=\changed{\frac{\binom{2k}{2}}{4\binom{\ell}{2}}},
\]
and if $e\in E$ lies within $A_i$, then
\[
\PP(e\in E(C))=\PP(i\in I)\cdot 1/6=k/(3\ell)\geq \changed{\frac{\binom{2k}{2}}{4\binom{\ell}{2}}}.
\]
Note that deleting any set of edges in $E$ from $C$ gives another subgraph in $\MM$. Therefore, we can alter our probability distribution to give a random subgraph $C'\in \MM$ so that $\PP(e\in E(C'))=\binom{2k}{2}/4\binom{\ell}{2}$.

That is, normalising appropriately, we can find weights $w_M$, $M\in \MM$, so that, for each $e\in E(G)$, we have $\sum_{M\in \MM:e\in E(M)}w_M=1$. Recalling that every graph in $\MM$ has a fractional $K_r$-decomposition, $G$ then has a fractional $K_r$-decomposition.
\end{proof}
\fi

We can now use this improved fractional decomposition of graphs containing a spanning copy of $M_{2\ell}$ to improve on Lemma~\ref{fraccliqueapprox1}.

\begin{lemma}\label{fraccliqueapprox2} Let $r\geq 3$, $\ell=\lceil (9r+8)/2\rceil$ and let $G$ be a graph with $n=4\ell m$ vertices and $\delta(G)\geq n-m/2$. Then, there is a set of weights $w_K$, $K\in \KK_r$, such that, for each edge $e\in E(G)$,
\begin{equation}\label{approxbound2}
1-\frac 1r\leq \sum_{K\in \KK_r:e\in E(K)}w_K\leq 1.
\end{equation}
\end{lemma}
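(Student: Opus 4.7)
The plan is to follow exactly the template used in the proof of Lemma~\ref{fraccliqueapprox1}, replacing the application of Lemma~\ref{minusmatchingind} by the stronger Lemma~\ref{Mdecomp} (which is what allows the improvement from $\ell \geq 16r+31$ down to $\ell = \lceil(9r+8)/2\rceil$).

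First, I would apply Lemma~\ref{pickrandomsubgraph} with its parameter $r$ replaced by $2\ell$. The hypotheses match: we have $n = 4\ell m = 2\cdot(2\ell)\cdot m$ vertices and $\delta(G) \geq n - m/2$. This yields a probability distribution on induced subgraphs $M \subset G$ with $|M| = 4\ell$ each containing a copy of $M_{2\ell}$, such that for every $e \in E(G)$,
\[
1 - \frac{4}{2\ell} \leq m^2 \cdot \PP(e \in E(M)) \leq 1.
\]
Let $\MM$ be the set of induced subgraphs of $G$ on $4\ell$ vertices containing a copy of $M_{2\ell}$, and define $p_M := m^2 \cdot \PP(M \text{ is selected})$ for $M \in \MM$. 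Since $\ell \geq (9r+8)/2 \geq 2r$ (trivially for $r \geq 3$), we have $2/\ell \leq 1/r$, and so
\[
1 - \frac{1}{r} \leq \sum_{M \in \MM : e \in E(M)} p_M \leq 1 \qquad \text{for each } e \in E(G).
\]

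Second, since each $M \in \MM$ has $4\ell$ vertices and a spanning copy of $M_{2\ell}$ with $\ell \geq (9r+8)/2$, Lemma~\ref{Mdecomp} provides a fractional $K_r$-decomposition of $M$: non-negative weights $w_{M,K}$ with $K \in \KK_r$ satisfying
\[
\sum_{K \in \KK_r : e \in E(K)} w_{M,K} = \mathbf{1}_{\{e \in E(M)\}} \qquad \text{for every } e \in E(G).
\]

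Finally, I would combine these by setting $w_K := \sum_{M \in \MM} p_M \cdot w_{M,K} \geq 0$ for each $K \in \KK_r$. Swapping the order of summation,
\[
\sum_{K \in \KK_r : e \in E(K)} w_K \;=\; \sum_{M \in \MM} p_M \sum_{K \in \KK_r : e \in E(K)} w_{M,K} \;=\; \sum_{M \in \MM : e \in E(M)} p_M,
\]
which lies in $[1 - 1/r, 1]$ by the bound established above, giving~\eqref{approxbound2}. There is no substantive obstacle: all the work has been packaged into Lemma~\ref{pickrandomsubgraph} and Lemma~\ref{Mdecomp}, and the only thing to verify is the numerical comparison $\ell \geq 2r$, which is immediate from the definition of $\ell$.
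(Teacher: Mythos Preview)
Your proposal is correct and follows essentially the same approach as the paper's own proof: apply Lemma~\ref{pickrandomsubgraph} with parameter $2\ell$, invoke Lemma~\ref{Mdecomp} on each subgraph in $\MM$, and combine the resulting fractional decompositions exactly as in Lemma~\ref{fraccliqueapprox1}. The paper's proof is terser but identical in substance, including the numerical check $1-4/(2\ell)\geq 1-1/r$.
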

\begin{proof} 
Let $\mathcal{M}$ be the set of induced subgraphs of $G$ with $4\ell$ vertices which contain a copy of $M_{2\ell}$. By Lemma~\ref{pickrandomsubgraph}, we can find non-negative weights $p_M$, $M\in \MM$, so that, for each $e\in E(G)$,
\begin{equation*}\label{edgeaim2}
1-\frac{1}{r}\leq 1-\frac{4}{2\ell}\leq \sum_{M\in \MM:e\in E(M)}p_M\leq 1.
\end{equation*}
By Lemma~\ref{Mdecomp}, each graph in $\MM$ has a fractional $K_r$-decomposition. Similarly to the proof of \changed{Lemma~\ref{fraccliqueapprox1}}, we can combine these fractional $K_r$-decompositions with the weighting of the graphs in $\MM$ to get a weighting $w_K$, $K\in \KK_r$, which satisfies~\eqref{approxbound2}.
\end{proof}

Subject only to the remaining proof of Lemma~\ref{minusmatching}, we can now complete the proof of Theorem~\ref{maintheorem}.

\begin{proof}[Proof of Theorem~\ref{maintheorem}.]
\ifcutfinished
\else
Let $\ell=\lceil (18r+26)/2\rceil$.
Note that we can assume $n=4\ell m\leq (36r+54)m\leq 50rm$ by copying each vertex $4\ell$ times. Then, we have $\delta(G)\geq |G|-m/2$. By Lemma~\ref{fraccliqueapprox2}, there is a set of weights $w_K$, $K\in \KK_{2r+2}$, such that, for each edge $e\in E(G)$,
\begin{equation*}\label{approxweights2}
1-\frac 1{2r+2}\leq \sum_{K\in \KK_{2r+2}:e\in E(K)}w_K\leq 1.
\end{equation*}
Thus, by Lemma~\ref{approxintoexact}, $G$ has a fractional $K_r$-decomposition.
\fi
\end{proof}


\section{Proof of Lemma~\ref{minusmatching}}\label{tedium}
It remains only to show that any complete graph $K_k$ on $k\geq 2r+2$ vertices with any set of independent edges $M$ removed, $K_k-M$, has a fractional $K_r$-decomposition, thus proving Lemma~\ref{minusmatching}. We will easily be able to show that we may assume that $k=2r+2$ and $1\leq |M|\leq r$. Then, up to symmetry, there are three types of edges in $K_{2r+2}-M$: edges with 0, 1, or 2 vertices in edges in $M$. We will consider three types of $r$-clique in $K_k-M$ defined by the number of vertices they contain in $M$. By adding weight uniformly to each clique of one type we can add weight to the edges of $K_k-M$ so that the same weight is added to edges of the same type. By choosing three different weights to add to the three different types of cliques, we can control the amount of weight added to each type of edge and gain a fractional $K_r$-decomposition of $K_k-M$.
\begin{proof}[Proof of Lemma~\ref{minusmatching}]
Note that we can assume that $G$ has $2r+2$ vertices. Indeed, if $G$ has more than $2r+2$ vertices, then by giving the induced $2r+2$ vertex subgraphs of $G$ an appropriate uniform weight we can fractionally decompose $G$ into graphs with $2r+2$ vertices each of which only lacks edges in some matching.

Let $G$ then be a graph with vertex set $[2r+2]$ so that $M:=E(\bar{G})$ is a matching. Let $k=|M|$, and let $A$ be the set of vertices in some edge in $M$, so that $|A|=2k$.
If $k=r+1$ or $0$, \changed{then adding weight $1$ to every $r$-clique in $G$ weights the edges of $G$ uniformly  (due to the symmetry in $G$), and hence, by normalising these weights appropriately, we can} find a fractional $K_r$-decomposition of $G$. Let us assume then that $1\leq k\leq r$.

For each $i\in \{0,1,2\}$, let $E_i=\{e\in E(G):|V(e)\cap A|=i\}$. If we can find a non-negative weighting $w_K$, $K\in \KK_r$, so that, for each $i\in \{0,1,2\}$,
\begin{equation}\label{extra}
\sum_{K\in \KK_r}|E(K)\cap E_i|\cdot w_K=|E_i|,
\end{equation}
then, by the symmetry in $G$, we can easily convert this into a fractional $K_r$-decomposition of $G$. {Indeed, given such a weighting $w_K$, $K\in \KK_r$, for each $K\in \KK_r$ let
\begin{equation}\label{extra3}
w'_K=\frac{1}{|\{K'\in \KK_r:|V(K')\cap A|=|V(K)\cap A|\}|}\sum_{K'\in \KK_r:|V(K')\cap A|=|V(K)\cap A|}w_{K'}.
\end{equation}
Note that, for each $\ell\in \{0,1,\ldots,k\}$, $w'_K$ is the same for each $K\in \KK_r$ with $|V(K)\cap A|=\ell$. Due to the symmetry in $G$, then, for each $i\in \{0,1,2\}$, $z_e:=\sum_{K\in \KK_r:e\in E(K)}w'_K$ is the same, $z_i$ say, for each $e\in E_i$. Furthermore, for each $\ell\in \{0,1,\ldots,k\}$ and $i\in \{0,1,2\}$, $|E_i\cap E(K)|$ is the same for each $K\in \KK_r$ with $|V(K)\cap A|=\ell$.
Thus, for each $\ell\in \{0,1,\ldots,k\}$ and $i\in \{0,1,2\}$, we have from \eqref{extra3} that
\begin{equation}\label{extra2}
\sum_{K\in \KK_r:|V(K)\cap A|=\ell}|E_i\cap E(K)|\cdot w'_K=\sum_{K\in \KK_r:|V(K)\cap A|=\ell}|E_i\cap E(K)|\cdot w_{K}.
\end{equation}
Therefore, for each $i\in \{0,1,2\}$,
\[
|E_i|\cdot z_i=\sum_{e\in E_i}z_e=\sum_{e\in E_i}\sum_{K\in \KK_r:e\in E(K)}w'_K=\sum_{K\in \KK_r}|E(K)\cap E_i|\cdot w'_K\overset{\eqref{extra2}}{=}\sum_{K\in \KK_r}|E(K)\cap E_i|\cdot w_K\overset{\eqref{extra}}{=}|E_i|,
\]
and hence $z_i=1$. That is, for any $e\in E(G)$, $\sum_{K\in \KK_r:e\in E(K)}w'_K=1$. Thus, it is sufficient to find a non-negative weighting that satisfies \eqref{extra}.}

For each $k'$, $0\leq k'\leq k$, if an $r$-clique has $k'$ vertices in $A$, then it contains $k'(k'-1)/2$ edges in $E_2$, $k'(r-k')$ edges in $E_1$ and $(r-k')(r-k'-1)/2$ edges in $E_0$. Furthermore, $|E_2|=2k(2k-2)/2$, $|E_1|=2k(2r+2-2k)$ and $|E_0|=(2r+2-2k)(2r+1-2k)/2$. Note that there exists an $r$-clique in $G$ with $k'$ vertices in $A$ if $0\le k'\le k$ and $r-k'\leq 2r+2-|A|=2r+2-2k$, or, equivalently, $\max\{0,2k-r-2\}\le k'\le k$.

Thus, to prove the lemma it is sufficient to find $\max\{0,2k-r-2\}\le k_1,k_2,k_3\le k$ and $x,y,z\geq 0$ for which we have
\[
\begin{pmatrix}
\frac12 k_1(k_1-1) &  \frac12 k_2(k_2-1) & \frac12 k_3(k_3-1) \\
k_1(r-k_1) & k_2(r-k_2) & k_3(r-k_3) \\
\frac12 (r-k_1)(r-k_1-1) & \frac12 (r-k_2)(r-k_2-1) & \frac12 (r-k_3)(r-k_3-1)
\end{pmatrix}
\cdot
\begin{pmatrix} x \\ y \\ z
\end{pmatrix}
\]
\begin{equation}\label{modnasty}\hspace{6.5cm}
=
\begin{pmatrix} k(2k-2) \\ 2k(2r+2-2k) \\ (r+1-k)(2r+1-2k)
\end{pmatrix}.
\end{equation}

Now, if $2k-r-2\ge 0$, then taking $k_1=k$, $k_2=k-1$, $k_3=2k-r-2$,
\[
x=\frac{2(r+1-k)(2(r-k)^2+kr+5r-4k+2)}{r(r-1)(r+2-k)},\;\;
\]
\[
y=\frac{2k(3r-2k+2)}{r(r-1)}\text{ and }
z=\frac{2k(k-1)}{r(r-1)(r+2-k)}
\]
satisfies~\eqref{modnasty}. As $k\leq r$, we have $k-1>2k-r-2$, so that $2k-r-2\leq k_1,k_2,k_3\leq r$ and, as $r\geq 3$ and $1\leq k\leq r$, we have $x,y,z\ge 0$.

If $2k-r-2<0$, then taking $k_1=k$, $k_2=k-1$, $k_3=0$,
\[
x=\frac{4(r+1-k)}{r-1},\;\;
y=\frac{4k}{r-1},\text{ and }
z=\frac{2(r+1-k)}{r(r-1)}
\]
satisfies~\eqref{modnasty}, where $x,y,z\ge 0$ as $k\le r$ and $r\ge 3$. Thus, for all possible values of $k$, $G$ has a fractional $K_r$-decomposition.
\end{proof}




\end{document}